\numberwithin{equation}{section}
\newtheorem{theorem}{Theorem}[section]
\newtheorem{lemma}[theorem]{Lemma}
\newtheorem{proposition}[theorem]{Proposition}
\newtheorem{conjecture}[theorem]{Conjecture}
\newtheorem{corollary}[theorem]{Corollary}
\theoremstyle{definition}
\newtheorem{remark}[theorem]{Remark}
\newcommand{\U}{\operatorname{U}}
\newcommand{\bbF}{\mathbb{F}}
\newcommand{\Aut}{\operatorname{Aut}}
\newcommand{\PGL}{\operatorname{PGL}}
\newcommand{\PSU}{\operatorname{PSU}}
\newcommand{\Sp}{\operatorname{Sp}}
\newcommand{\PSp}{\operatorname{PSp}}
\newcommand{\GL}{\operatorname{GL}}
\newcommand{\trdeg}{\operatorname{trdeg}}
\newcommand{\Sym}{\operatorname{S}}
\newcommand{\Alt}{\operatorname{A}}
\newcommand{\ed}{\operatorname{ed}}
\newcommand{\Char}{\operatorname{char}}
\newcommand{\lev}{\operatorname{lev}}
\newcommand{\rd}{\operatorname{rd}}
\newcommand{\SU}{\operatorname{SU}}
\begin{document}
	
	\author{Oakley Edens}   
	\thanks{Oakley Edens was partially supported by an Undergraduate Student Research Award (USRA) from 
		the National Sciences and Engineering Research Council of Canada.} 
	
	\author{Zinovy Reichstein}
	\address{Department of Mathematics\\
		University of British Columbia\\
		Vancouver, BC V6T 1Z2\\Canada}
	\thanks{Zinovy Reichstein was partially supported by an Individual Discovery Grant from the
		National Sciences and Engineering Research Council of
		Canada.}
	
	\subjclass[2020]{12E05, 14G17}
	
	\keywords{Resolvent degree, Hilbert's 13th Problem, positive characteristic}
	\title{Hilbert's 13th problem in prime characteristic}
	
	\begin{abstract}
		The resolvent degree $\rd_{\mathbb C}(n)$ is the smallest integer $d$ such that a root of the general polynomial
		\[ f(x) = x^n + a_1 x^{n-1} + \ldots + a_n \]
		can be expressed as a composition of algebraic functions in at most $d$ variables with complex coefficients. 
		It is known that $\rd_{\mathbb C}(n) = 1$ when $n \leqslant 5$. Hilbert was particularly interested in the next three cases: he asked if $\rd_{\mathbb C}(6) = 2$ (Hilbert's Sextic Conjecture), $\rd_{\mathbb C}(7) = 3$ (Hilbert's 13th Problem) and $\rd_{\mathbb C}(8) = 4$ (Hilbert's Octic Conjecture). These problems remain open. It is known that $\rd_{\mathbb C}(6) \leqslant 2$,
		$\rd_{\mathbb C}(7) \leqslant 3$ and $\rd_{\mathbb C}(8) \leqslant 4$. It is not known whether or not $\rd_{\mathbb C}(n)$ can be $> 1$ for any
		$n \geqslant 6$. 
		
		In this paper, we show that all three of Hilbert's conjectures can fail if we replace $\mathbb C$ with a base field of positive characteristic.
	\end{abstract}
 
	\maketitle

     \section{Introduction}
	The algebraic form of Hilbert's 13th Problem asks for the resolvent degree $\rd_{\mathbb C}(n)$ of the general polynomial 
	\begin{equation*}
		f(x)=x^n+a_1x^{n-1}+\ldots+ a_{n-1}x + a_n,
	\end{equation*}
	where $a_1, \ldots , a_n$ are independent variables. Here $\rd_{\mathbb C}(n)$ is the minimal integer $d$ such that every root of $f(x)$ can be obtained in a finite number of steps, starting with $\mathbb C(a_1, ..., a_n)$ and adjoining an algebraic function in $\leqslant d$ variables at each step. For a precise definition, see~\cite{brauer}, \cite{farb-wolfson},  \cite{reichstein2022hilberts}, \cite{reichstein-ems} or Section~\ref{sect.prel} below.
	It is known that $\rd_{\mathbb C}(n) = 1$ for every $n \leqslant 5$. It is not known whether or not $\rd_{\mathbb C}(n)$ is bounded from above, as $n$ tends to infinity or even 
	if $\rd_{\mathbb C}(n)$ can be greater than $1$ for any $n$. Various upper bounds on $\rd_{\mathbb C}(n)$ have been proved 
	over the past 200 years. For an overview, see \cite{dixmier}. These classical bounds have recently been sharpened by Wolfson~\cite{wolfson}, Sutherland~\cite{sutherland}, and Heberle-Sutherland~\cite{heberle-sutherland}.
	All of them are of the form $\rd_{\mathbb C}(n)\leqslant n-\alpha(n)$, where $\alpha(n)$ 
	is an unbounded but very slow-growing function of $n$. 
	There is a wide gap between the best known lower bound, $\rd_{\mathbb C}(n) \geqslant 1$, and the best known upper bound, 
	$\rd_{\mathbb C}(n) \leqslant n - \alpha(n)$. It is fair to say that after two 
	centuries of research, we still know very little 
	about $\rd_{\mathbb C}(n)$ for $n \geqslant 6$. Specifically, Hilbert conjectured the following values for small $n$.
	
	\begin{conjecture} \label{conj.hilbert} 
		{\rm (a)} $\rd_{\mathbb C}(6) = 2$, {\rm (b)} $\rd_{\mathbb C}(7)=3$, {\rm (c)} $\rd_{\mathbb C}(8) = 4$.
	\end{conjecture}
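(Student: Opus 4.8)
Each equality splits into an upper and a lower bound, and the two halves have entirely different characters; only the upper halves are within reach of current methods, and in fact they are already known. For the upper bounds $\rd_{\mathbb C}(6)\leqslant 2$, $\rd_{\mathbb C}(7)\leqslant 3$ and $\rd_{\mathbb C}(8)\leqslant 4$ the plan is classical. Beginning with $f(x)=x^n+a_1x^{n-1}+\dots+a_n$, a translation $x\mapsto x-a_1/n$ removes $a_1$ over the ground field, and a sequence of Tschirnhaus transformations, each paired with an accessory irrationality of controlled arity, successively normalizes $a_2,a_3$ while homogeneity scales away one further parameter. Tracking the number of parameters that survive this normalization shows that a root of the general degree-$n$ polynomial is an algebraic function of $n-4$ variables, which for $n=6,7,8$ gives exactly $2,3,4$. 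I would carry this out inside the $S_n$-torsor formalism of Section~\ref{sect.prel}, so that the arity of every accessory irrationality is visibly bounded and the inequality $\rd_{\mathbb C}(n)\leqslant n-4$ becomes rigorous.

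The lower bounds $\rd_{\mathbb C}(6)\geqslant 2$, $\rd_{\mathbb C}(7)\geqslant 3$ and $\rd_{\mathbb C}(8)\geqslant 4$ are the true content, and here the plan must confront the defining difficulty of the subject: one has to prove that a root of the general sextic, septic, or octic \emph{cannot} be expressed as a composition of algebraic functions in fewer variables. Such a putative composition amounts to a tower of extensions of $\mathbb C(a_1,\dots,a_n)$ in which every step is pulled back from a single algebraic function of bounded arity, so what is needed is an invariant of the associated $S_n$-cover that is obstructed, or strictly forced to grow, under exactly the operations allowed in assembling such a tower. The natural first candidate is essential dimension, since $\ed(S_n)$ and $\ed_p(S_n)$ are well understood for $n=6,7,8$ through the fixed-point and stratification techniques of Buhler--Reichstein and Duncan.

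The expected --- and, with present technology, insurmountable --- obstacle is that essential dimension does \emph{not} bound resolvent degree from below: it records the least number of parameters to which a torsor can be compressed, whereas resolvent degree records the arity of the functions needed to resolve it, and these two quantities decouple. Thus no known invariant forces $\rd_{\mathbb C}(n)>1$ for even a single $n\geqslant 6$, which is precisely why Conjecture~\ref{conj.hilbert} is open. A genuine program would have to supply a new obstruction altogether --- for instance a cohomological or motivic invariant that is additive under fibre products of low-dimensional covers and sensitive to the $S_6$-, $S_7$- and $S_8$-structure --- or a specialization principle under which a hypothetical solution of arity $d-1$ would degenerate into an impossible configuration. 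Establishing any such lower bound is the decisive step, and it is the one I do not expect to be able to complete; the present paper instead circumvents the issue by exhibiting the failure of all three equalities in prime characteristic.
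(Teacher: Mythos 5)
You have correctly recognized that the statement is a conjecture, not a theorem: the paper offers no proof of it, says explicitly that all three equalities remain open, and instead proves (Theorem~\ref{thm.main}) that the analogous equalities fail over base fields of suitable positive characteristic. Your treatment of the two halves matches the paper's account exactly. The upper bounds $\rd_{\mathbb C}(6)\leqslant 2$, $\rd_{\mathbb C}(7)\leqslant 3$, $\rd_{\mathbb C}(8)\leqslant 4$ of \eqref{e.upper-bounds} are classical (Hamilton; modern treatments in Dixmier and in Farb--Wolfson), and your $n-4$ parameter count is the standard one --- though be aware that making the arity of each accessory irrationality rigorous (e.g.\ normalizing $a_3$ requires producing a point on an intersection of a quadric and a cubic) is precisely the nontrivial content of those references, not routine bookkeeping. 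On the lower bounds your assessment is also accurate: the only known bound is $\rd_{\mathbb C}(n)\geqslant 1$ (cf.\ Lemma~\ref{lem.prel1}(b)); essential dimension bounds resolvent degree from \emph{above}, so it cannot supply the needed obstruction; and no invariant is currently known that forces $\rd_{\mathbb C}(n)>1$ for any $n\geqslant 6$. So your proposal honestly proves only what is provable, identifies the genuine open gap, and is consistent with everything the paper asserts; there is no paper proof to diverge from.
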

	
	(a) and (c) appeared in~\cite[p.~247]{hilbert9}; they 
		are known as Hilbert's sextic and octic conjectures, respectively.
		(b) is taken from the statement of Hilbert's 13th Problem~\cite[p.~424]{hilbert-problems}.
		The upper bounds, 
		\begin{equation} \label{e.upper-bounds}
			\text{$\rd_{\mathbb C}(6) \leqslant 2$, $\rd_{\mathbb C}(7) \leqslant 3$ and $\rd_{\mathbb C}(8) \leqslant 4$} 
		\end{equation} 
		go back to the work of Hamilton in the 1830s~\cite{hamilton}; for modern treatments, see~\cite[p.~87]{dixmier} 
		or~\cite[Corollary 7.3]{farb-wolfson}. The reverse inequalities remain out of reach.
	
	Recently Farb and Wolfson~\cite{farb-wolfson} defined the resolvent degree $\rd_k(G)$, where $G$ is a finite group and $k$ is a field 
	of characteristic $0$. Setting $G$ to be the symmetric group $\Sym_n$ and $k$ to be the field $\mathbb C$ of complex numbers, we recover
	$\rd_{\mathbb C}(n)$. This definition was extended by the second author~\cite{reichstein2022hilberts} to the case, where $k$ is an arbitrary field
	and $G$ is an arbitrary algebraic group over $k$. For a fixed algebraic group $G$ defined over the integers, $\rd_k(G_k)$ depends only on
	the characteristic of $k$ and not on $k$ itself; see~\cite[Theorem 1.2]{reichstein2022hilberts}. 
	We will write $\rd_p(G)$ in place of $\rd_k(G)$, when $k$ is a field of characteristic $p \geqslant 0$.
	Moreover, $\rd_0(G) \geqslant \rd_p(G)$ for any $p > 0$; see~\cite[Theorem 1.3]{reichstein2022hilberts}.
	
	In view of the last inequality, it is natural to ask if more can be said about Conjecture~\ref{conj.hilbert} in the case, 
	where the base field $\mathbb C$ is replaced by a field $k$ of positive characteristic. Conjecturally, one expects $\rd_p(G)$ to be the same as $\rd_0(G)$ when $p$ does not divide the order of $G$. We will thus 
	examine $\rd_p(\Sym_n)$ in the case, where $n = 6, 7, 8$ and $2 \leqslant \Char(k) = p \leqslant n$. Our main result is as follows.
	
	\begin{theorem} \label{thm.main} Let $\Sym_n$ denote the symmetric group on $n$ letters. Then
		
		\smallskip
		{\rm (a)} $\rd_3(\Sym_6) \leqslant 1$, 
		\quad 
		{\rm (b)} $\rd_3(\Sym_7) \leqslant 2$, 
		\quad
		{\rm (c)} $\rd_5(\Sym_7) = \rd_5(\Sym_6) \leqslant 2$,
		
		\smallskip
		{\rm (d)} $\rd_7(\Sym_7) \leqslant 2$, 
		\quad
		{\rm (e)}  $\rd_2(\Sym_8) \leqslant 3$.
	\end{theorem}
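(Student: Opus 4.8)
The plan is to reduce each case to the alternating group and then to exploit a low-dimensional model that is available only in the stated characteristic. The sequence $1\to\Alt_n\to\Sym_n\to\bZ/2\to 1$ lets me resolve a versal $\Sym_n$-torsor in two steps: the quotient $\bZ/2$-torsor is cut out by one square root — in characteristic $2$, by one Artin--Schreier root $x^2-x=a$ — an algebraic function of a single variable, and the residual $\Alt_n$-torsor is resolved over the enlarged base. Hence $\rd_p(\Sym_n)\leqslant\max\{1,\rd_p(\Alt_n)\}$, and it suffices to bound $\rd_p(\Alt_n)$. For that I will use the comparison $\rd_p(G)\leqslant\ed_p(G)$ (here $\ed_p$ is ordinary essential dimension over a field of characteristic $p$) together with $\ed_p(G)\leqslant\dim\bP(V)$, valid whenever a central extension of $G$ has a faithful linear representation $V$: then $G$ acts generically freely on $\bP(V)$, the versal torsor lives over a base of dimension $\dim\bP(V)$, and a root becomes an algebraic function of that many variables.

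Two cases are then immediate from exceptional isomorphisms. For (a), $\Alt_6\cong\operatorname{PSL}_2(\bF_9)$ lies in $\PGL_2(\ol{\bF}_3)=\Aut(\bP^1)$ with linear lift $\SL_2(\bF_9)$, so it acts generically freely on $\bP^1$, giving $\ed_3(\Alt_6)\leqslant 1$ and $\rd_3(\Sym_6)\leqslant 1$. For (e), $\Alt_8\cong\GL_4(\bF_2)=\PGL_4(\bF_2)$ acts generically freely on $\bP^3$ through its natural $4$-dimensional module, so $\ed_2(\Alt_8)\leqslant 3$ and $\rd_2(\Sym_8)\leqslant 3$. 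Part (b) then follows from (a) and the classical degree-lowering recursion $\rd_p(\Sym_n)\leqslant\rd_p(\Sym_{n-1})+1$ (reduction of the general degree-$n$ polynomial to degree $n-1$ at the cost of one variable, valid in every characteristic): $\rd_3(\Sym_7)\leqslant\rd_3(\Sym_6)+1\leqslant 2$.

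For (c) the engine is the exceptional embedding $\Alt_7\hookrightarrow\operatorname{PSL}_3(\bF_{25})$, with triple-cover lift $3\cdot\Alt_7\hookrightarrow\SL_3(\bF_{25})$, which has no characteristic-$0$ counterpart; paired with the Valentiner embedding $3\cdot\Alt_6\hookrightarrow\SL_3$ it furnishes generically free actions of $\Alt_7$ and $\Alt_6$ on $\bP^2$. Thus $\ed_5(\Alt_7)\leqslant 2$ and $\rd_5(\Sym_7)\leqslant 2$, while $\rd_5(\Sym_6)\leqslant\rd_0(\Sym_6)\leqslant 2$ is already forced by \eqref{e.upper-bounds} and $\rd_0\geqslant\rd_p$. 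For the equality $\rd_5(\Sym_7)=\rd_5(\Sym_6)$, one inequality is subgroup monotonicity $\rd_5(\Sym_6)\leqslant\rd_5(\Sym_7)$ for $\Sym_6\leqslant\Sym_7$; the reverse $\rd_5(\Sym_7)\leqslant\rd_5(\Sym_6)$ I would extract from the shared $\bP^2$-geometry, realizing $\Alt_6$ as the stabilizer in $\Alt_7$ of a point of the exceptional orbit, so that solving the generic septic reduces to solving the generic sextic.

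The main obstacle is (d). In characteristic $7$ the representation-theoretic mechanism fails: since $7\mid|\Alt_7|$, the group $\Alt_7$ has no faithful projective representation of dimension $\leqslant 3$ in characteristic $7$ (in contrast to the characteristic-$5$ situation), so there is no action on $\bP^1$ or $\bP^2$ to use, and the generic recursion gives only $\rd_7(\Sym_7)\leqslant\rd_7(\Sym_6)+1\leqslant 3$. To reach $\leqslant 2$ I would exploit that here $n=p=7$: the general degree-$7$ polynomial in characteristic $7$ is genuinely special — one cannot translate away $a_1$, the derivative drops in degree, and the $7$-cycle is governed by Frobenius/Artin--Schreier structure — and I would try to convert this into a characteristic-$7$ reduction of the septic to the sextic, i.e. $\rd_7(\Sym_7)\leqslant\rd_7(\Sym_6)$, whereupon $\rd_7(\Sym_6)\leqslant\rd_0(\Sym_6)\leqslant 2$ finishes the proof. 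Producing this reduction — equivalently an $\ed_7(\Alt_7)\leqslant 2$ bound from a generically free $\Alt_7$-action on a rational surface over $\ol{\bF}_7$, obtained by compressing its action on $\bP^4$ — is the step I expect to be hardest, together with the reverse inequality required for the equality in (c).
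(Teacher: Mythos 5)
Your proposal rests on a lemma that is false, and this affects every part except (e). You claim that $\ed_p(G)\leqslant\dim\bP(V)$ holds ``whenever a central extension of $G$ has a faithful linear representation $V$,'' on the grounds that $G$ then acts generically freely on $\bP(V)$. Generic freeness does not imply versality: a $G$-torsor $T\to\Spec K$ need not lift to a torsor of the central extension, and the twisted form ${}^T\bP(V)$ is a Severi--Brauer variety that can fail to have a $K$-point. Concretely, $\Alt_5\simeq\operatorname{PSL}_2(\bF_5)\subset\PGL_2(\bC)$ acts generically freely on $\bP^1$ with linear lift $\SL_2(\bF_5)=2\cdot\Alt_5$, yet $\ed_{\bC}(\Alt_5)=2$; likewise the Valentiner group $3\cdot\Alt_6\subset\SL_3(\bC)$ gives a generically free $\Alt_6$-action on $\bP^2$, yet $\ed_{\bC}(\Alt_6)=3$, which your lemma would contradict. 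What is true is the resolvent-degree analogue, namely the paper's Lemma~\ref{lem.prel2}: a generically free $G$-action on a degree-$a$, dimension-$b$ subvariety of $\bP^n$ gives $\rd_k(G)\leqslant\max\{b,\rd_k(\Sym_a)\}$, where the term $\rd_k(\Sym_a)$ is exactly the price of killing the lifting obstruction by an accessory irrationality of degree $a$ --- a move resolvent degree permits and essential dimension does not. With this substitution your part (a) becomes literally the paper's proof ($\operatorname{PSL}_2(9)$ on $\bP^1$, $a=1$, $b=1$), and your part (e) also becomes correct and is a genuinely different, arguably simpler, argument than the paper's: $\Alt_8\simeq\GL_4(\bF_2)$ acts generically freely on $\bP^3$, so $\rd_2(\Alt_8)\leqslant\max\{3,\rd_2(\Sym_1)\}=3$. (In this one case even your $\ed$-route can be salvaged, because $V=\bF_2^4$ is a faithful linear representation of $\Alt_8$ itself, not of a proper central extension, hence versal.)

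The remaining parts have gaps that this repair does not fix. For (b) you invoke a ``classical degree-lowering recursion $\rd_p(\Sym_n)\leqslant\rd_p(\Sym_{n-1})+1$''; no such recursion is established in the classical or recent literature (the classical results are absolute bounds like $\rd(n)\leqslant n-4$, proved by explicit Tschirnhaus transformations, not by a recursion), and you give no proof --- yet it is precisely this step that would turn $\rd_3(\Sym_6)\leqslant 1$ into $\rd_3(\Sym_7)\leqslant 2$. The paper instead uses the inclusion $2\cdot\Alt_7\subset\SU_4(3)$ together with Proposition~\ref{prop.unitary-symplectic}(b): the $\U_4(3)$-invariant Hermitian hypersurface $x_1^4+\cdots+x_4^4=0$ in $\bP^3$ has degree $4$ and dimension $2$, giving $\rd_3(\Alt_7)\leqslant\max\{2,\rd_3(\Sym_4)\}=2$. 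For (c), your embedding $3\cdot\Alt_7\subset\SL_3(\bF_{25})$ is the same one the paper uses, and the repaired argument does give $\rd_5(\Sym_7)\leqslant 2$; but the equality $\rd_5(\Sym_7)=\rd_5(\Sym_6)$ requires $\rd_5(\Sym_7)\leqslant\rd_5(\Sym_6)$ even if the latter should turn out to be $1$, and your sketch (stabilizers of points in an exceptional orbit) is not carried out. The paper gets this from Lemma~\ref{lem.prel2} applied not to $\bP^2$ but to the $\Alt_7$-invariant Hermitian curve $x_1^6+x_2^6+x_3^6=0$ of degree $q+1=6$: this yields $\rd_5(\Alt_7)\leqslant\max\{1,\rd_5(\Sym_6)\}=\rd_5(\Sym_6)$, the degree $6$ of the invariant curve being exactly what makes the equality come out. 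Finally, for (d) you concede you have no proof. The paper's mechanism is the one your heuristic gestures at but does not develop: when $n=p^r$, so that $\binom{n}{1}\equiv\binom{n}{2}\equiv\binom{n}{3}\equiv 0 \pmod p$, the variety $\{s_1=s_2=s_3=0\}\subset\bP^{n-1}$ is a cone over the fixed point $(1:\cdots:1)$ (this is the precise form of ``one cannot translate away $a_1$''); its base is a degree-$6$ variety of dimension $n-5$ with a generically free $\Sym_n$-action, and Lemma~\ref{lem.prel2} then gives $\rd_p(\Sym_n)\leqslant\max\{n-5,\rd_p(\Sym_6)\}=n-5$, hence $\rd_7(\Sym_7)\leqslant 2$ and, as an alternative to your route, $\rd_2(\Sym_8)\leqslant 3$.
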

	
     In particular, every part of Conjecture~\ref{conj.hilbert} fails if $\mathbb{C}$ is replaced by a base field of (suitable) positive characteristic.  
     
     Theorem~\ref{thm.main} may be viewed as complementing the results of~\cite{wolfson},~\cite{sutherland},~\cite{heberle-sutherland} and~\cite{gsw}. These papers generalize the inequalities $\rd_0(\Sym_6) \leqslant 2$, $\rd_0(\Sym_7) \leqslant 3$ and
 $\rd_0(\Sym_8) \leqslant 4$ of~\eqref{e.upper-bounds} by giving upper bounds on $\rd_0(G)$, when $G$ is the symmetric group $\Sym_n$ ($n$ arbitrary)~\cite{wolfson},~\cite{sutherland},~\cite{heberle-sutherland} or when $G$ is a sporadic finite simple group~\cite{gsw}. 
 Here we stay with $G = \Sym_6$, $\Sym_7$, $\Sym_8$ and prove sharper bounds on $\rd_p(G)$ for suitable small primes $p$.  
	
	We also consider the Weil group $W(E_6)$ of the root system of type $E_6$. It is shown in \cite[Section 8]{farb-wolfson} that
	this group arises naturally in connection with Conjecture~\ref{conj.hilbert}(a), and that $\rd_0(W(E_6)) \leqslant 3$; see also
	\cite[Proposition 15.1]{reichstein2022hilberts}. We show that 
	in (small) positive characteristic, this inequality can be sharpened.
	
	\begin{theorem} \label{thm.E6} $\rd_p(W(E_6)) \leqslant 2$ if $p = 2$, $3$ or $5$.
	\end{theorem}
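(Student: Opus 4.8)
The plan is to peel $W(E_6)$ down to its unique nonabelian composition factor and then, separately for each prime, to exploit the exceptional isomorphisms realizing that factor as a classical group over $\bF_2$ or $\bF_3$. Recall that $W(E_6)$ has a simple normal subgroup $S$ of index $2$, with $S \cong \PSU_4(\bF_2) \cong \PSp_4(\bF_3)$ of order $25920$, and that $\{2,3,5\}$ are exactly the primes dividing $|S|$. Throughout I would use the resolvent (fibration) inequality recorded in Section~\ref{sect.prel} (see also \cite{farb-wolfson, reichstein2022hilberts}): for $H \le G$ of index $m$, $\rd_p(G) \le \max\bigl(\rd_p(H),\, \rd_p(\overline G)\bigr)$, where $\overline G$ is the image of $G$ in $\Sym_m$ acting on $G/H$; taking $H = S$ normal gives $\rd_p(W(E_6)) \le \max(\rd_p(S), \rd_p(C_2)) = \max(\rd_p(S), 1)$. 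Hence it suffices to prove $\rd_p(S) \le 2$ for $p = 2,3,5$. For the next reduction I would use the maximal subgroup $H = 2^4{:}A_5 \le S$ of index $27$, the stabilizer of one of the $27$ lines. Since $A_5$ embeds into $\PGL_2(\overline{\bF_p})$ for every $p$ (as a subgroup of $\operatorname{PSL}_2(\bF_4)$, $\operatorname{PSL}_2(\bF_9)$ or $\operatorname{PSL}_2(\bF_5)$), it acts faithfully on $\bP^1$, so $\rd_p(A_5) \le 1$; as $2^4$ is solvable, the fibration property gives $\rd_p(H) \le 1$ for all three primes. The resolvent inequality then reduces everything to the degree-$27$ resolvent, $\rd_p(S) \le \max\bigl(1,\, \rd_p(S \curvearrowright S/H)\bigr)$, where $S \curvearrowright S/H$ is the action on the $27$ lines.

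For $p = 2$ the point is that $S \cong \PSU_4(\bF_2) = \SU_4(\bF_2)$ is the group of $\bF_2$-points of $\mathrm{SU}_4$ (type ${}^2\!A_3$) in its \emph{defining} characteristic, and here the $27$ lines acquire a low-dimensional geometric home: they are the $27$ points of the generalized quadrangle $\mathrm{GQ}(2,4)$, i.e. the $\bF_4$-points of a Hermitian surface $\cH \subset \bP^3$. The surface $\cH$ is rational and two-dimensional, and $S$ acts on it through $\mathrm{SU}_4$. I would realize the versal $S$-torsor on $\cH$ and reduce its structure group to the stabilizer of a point of $\cH$, at a cost of at most $\dim \cH = 2$; that stabilizer is a finite parabolic $P^F$ whose unipotent radical is a $2$-group and whose Levi factor is isomorphic to $C_3 \times \Sym_3$, hence solvable, so $\rd_2(P^F) \le 1$. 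The fibration property would then give $\rd_2(S) \le \max(2,1) = 2$, and so $\rd_2(W(E_6)) \le 2$. The step requiring care is the \emph{versality} of the surface action — i.e. that the generic $S$-torsor really is induced from a point of $\cH$ rather than only from $\bP(V) = \bP^3$ — which is precisely the defining-characteristic (Lang--Steinberg) input that makes the flag-variety dimension, rather than $\dim \bP^3 = 3$, the relevant quantity.

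For $p = 3$ and $p = 5$ there is no such low-dimensional homogeneous space ($S$ is then of type $C_2$, whose minimal homogeneous spaces have dimension $3$), so I would instead bound the degree-$27$ resolvent directly, using the classical sextic resolvent of the $27$ lines together with Theorem~\ref{thm.main}. Concretely, a double-six contracts the cubic surface to six points in $\bP^2$, whose splitting field carries a $\Sym_6$-action, and reconstructing the full configuration of $27$ lines from these six points is the blow-up of $\bP^2$ — a construction living on a rational surface, contributing at most $2$. This should yield $\rd_p(S \curvearrowright S/H) \le \max\bigl(\rd_p(\Sym_6),\, 2\bigr)$. Invoking Theorem~\ref{thm.main}(a), $\rd_3(\Sym_6) \le 1$, and Theorem~\ref{thm.main}(c), $\rd_5(\Sym_6) \le 2$, both cases give $\rd_p(S \curvearrowright S/H) \le 2$, hence $\rd_p(S) \le 2$ and $\rd_p(W(E_6)) \le 2$.

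The main obstacle is the reduction of the previous paragraph: making the passage between the $27$ lines and the sextic resolvent precise and \emph{loss-free}, so that the total cost lands at $2$ rather than $3$. This is exactly where positive characteristic is essential — over $\bC$ the analogous reduction of Farb--Wolfson costs one additional accessory irrationality, giving only $\rd_0(W(E_6)) \le 3$, and what rescues the bound here is the drop in $\rd_p(\Sym_6)$ supplied by Theorem~\ref{thm.main}. I would expect the cleanest route to be to organize the entire reconstruction on the (rational, two-dimensional) cubic surface attached to the six points, arranging that only the sextic itself contributes resolvent degree beyond the surface; verifying that nothing further than the $\Sym_6$-problem and the two surface directions intervenes is the crux of the argument.
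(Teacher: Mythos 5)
Your opening reduction --- $\rd_p(W(E_6)) \leqslant \max\{\rd_p(S),1\}$ with $S \cong \SU_4(2) \cong \PSp_4(3)$ --- is exactly the paper's first step. After that, both halves of your argument have genuine gaps. For $p=3$ and $p=5$ everything hangs on the inequality $\rd_p(S \curvearrowright S/H) \leqslant \max\{\rd_p(\Sym_6), 2\}$, which you never prove and yourself identify as ``the crux.'' This is not a fixable technicality: your proposed proof of it (contract a double-six, recover the $27$ lines by blowing up six points of $\bP^2$) is characteristic-independent, and for $p=5$ your only other input, $\rd_5(\Sym_6)\leqslant 2$, is itself just the characteristic-zero bound \eqref{e.upper-bounds} transported by \eqref{e.thm1.3}. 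So if your reduction were valid, the identical argument over $\bC$ would give $\rd_0(W(E_6)) \leqslant \max\{\rd_0(\Sym_6),2\} = 2$, improving the Farb--Wolfson bound $\rd_0(W(E_6)) \leqslant 3$ --- an open problem, and precisely what this paper is careful not to claim. The loss in the characteristic-zero argument is real: for instance, \emph{choosing} a double-six is itself a degree-$36$ accessory irrationality (with stabilizer $\Sym_6 \times C_2$) whose cost your sketch silently drops. The $p=2$ case has a different gap: ``reduce the structure group to the stabilizer of a point of $\cH$ at cost $\dim \cH = 2$'' conflates two kinds of stabilizers. Any point of the twisted form of $\cH$ over a function field that one could hope to produce is generic, and its stabilizer is \emph{trivial} by generic freeness; the parabolic $P^F$ stabilizes only the special $\bF_4$-rational points, which do not persist after twisting. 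Moreover, producing \emph{any} point of the twisted surface at level $\leqslant 2$ is exactly the unproved step --- Lang--Steinberg gives vanishing of $H^1$ over finite fields, not over the function fields where the versal torsor lives. (A small factual slip in the same passage: $\cH(\bF_4)$ has $45$ points; the $27$ objects are the lines, i.e.\ generators, of $\cH$, which are the points of $\mathrm{GQ}(2,4)$.) Finally, the subgroup inequality you attribute to Section~\ref{sect.prel} is only stated there for \emph{normal} subgroups (Lemma~\ref{lem.prel1}(d)); the cover version you need for $H = 2^4{:}\Alt_5$ exists in Farb--Wolfson but is not among this paper's recorded tools.

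What the paper actually does is supply the mechanism your sketch is missing: Lemma~\ref{lem.prel2} (produce a point by cutting with a linear space, at cost $\rd_p(\Sym_{\deg})$) applied to the defining-characteristic invariant hypersurfaces of Proposition~\ref{prop.unitary-symplectic}. For $p=2$: $\U_4(2)$ acts generically freely on the Hermitian surface $x_1^3+\cdots+x_4^3=0$ in $\bP^3$ (dimension $2$, degree $3$), whence $\rd_2(\U_4(2)) \leqslant \max\{2,\rd_2(\Sym_3)\}=2$ --- your surface, exploited correctly. For $p=3$: via $\SU_4(2)\cong\PSp_4(3)$, the group $\Sp_4(3)$ preserves the surface $\omega(\mathbf{x},\mathbf{x}^3)=0$ of degree $4$ in $\bP^3$, giving $\rd_3(\Sp_4(3)) \leqslant \max\{2,\rd_3(\Sym_4)\}\leqslant 2$; note this surface is \emph{not} a homogeneous space, so your reason for abandoning the defining-characteristic route at $p=3$ (that the minimal homogeneous spaces of type $C_2$ are $3$-dimensional) does not apply. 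For $p=5$, the genuine characteristic-$5$ input is the embedding $2\cdot\SU_4(2)\subset \SU_4(5)$ from Table 8.11 of Bray--Holt--Roney-Dougal, after which the Hermitian surface over $\bF_{25}$ (degree $6$, dimension $2$) gives $\rd_5(W(E_6)) \leqslant \max\{2,\rd_5(\Sym_6)\}\leqslant 2$.
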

	
	In summary, $\rd_p(G) \leqslant d$, where the value of $d$ is given in the table below.
	
	\begin{table}[h]
		\centering \rm
		\begin{tabular}{c|ccccc}
			& \multicolumn{5}{c}{Characteristic}\\
			\hline
			$G$ & 0 & 2 & 3 & 5 & 7\\
			\hline
			$\Sym_6$ & 2 & 2 & 1 & 2 & 2\\
			$\Sym_7$ & 3 & 3 & 2 & 2 & 2\\
			$\Sym_8$ & 4 & 3 & 4 & 4 & 4 \\
			$W(E_6)$ & 3 & 2 & 2 & 2 &3\\
		\end{tabular}
	\end{table} 
 
	The remainder of this paper is structured as follows. In Section~\ref{sect.prel} we recall the definition
	of resolvent degree of a finite group and collect some of its properties for future use.  
	We believe that part (a) of Theorem~\ref{thm.main} was known classically; for lack of a reference, we include a short proof at the end of Section~\ref{sect.prel}. In Section~\ref{sect.symplectic-unitary} we prove upper bounds on the resolvent degree of finite symplectic and unitary groups.
	These upper bounds play a key role in the proofs of parts (b) and (c) of Theorem~\ref{thm.main} and
	of Theorem~\ref{thm.E6} in
	Section~\ref{sect.main-cd}. 
	Parts (d) and (e) of Theorem~\ref{thm.main} are proved in Section~\ref{sect.main-de} by a different (more geometric) argument
	inspired by our previous work on the essential dimension of symmetric groups~\cite{edens-reichstein}.
	
	\section{Preliminaries}
	\label{sect.prel}
	
	\subsection{The level of a finite field extension}
	Let $K$ be a field containing a base field $k$, and $L/K$ be a finite extension. We say that $L/K$
	descends to an intermediate field $k \subset K_0 \subset K$ if 
	$L = L_0 \otimes_{K_0} K$ for some finite extension $L_0/K_0$. The essential dimension $\ed_k(L/K)$ is then the smallest transcendence degree $\trdeg_k(K_0)$ such that $L/K$ descends to $K_0$.
	
	The level $\lev_k(L/K)$ of a finite extension $L/K$ is the smallest integer $d$ such that there exists a tower of field extensions 
	\[
	\xymatrix{ & K_m \ar@{-}[d] & \\ 
		L \ar@{-}[ur] & \vdots \ar@{-}[d] &   \\
		& K_1 \ar@{-}[d] & \\
		& K_0 \ar@{-}[luu] \ar@{=}[r] & K  }
	\]
	with $[K_{i} : K_{i - 1}] < \infty$ and $\ed_k(K_{i}/K_{i - 1}) \leqslant d$ for every $i = 1, \ldots, m$. 
	
	The resolvent degree $\rd_k(G)$ of a finite group $G$ over a field $k$ is defined as the maximal value of $\lev_k(L/K)$, where
	the maximum is taken over all fields $K$ containing $k$ and all $G$-Galois field extensions $L/K$\footnote{Note that this maximum is well defined because a $G$-Galois field extension $L/K$ with $k \subset K$ exists for any finite group $G$. Indeed, consider the regular representation $G\hookrightarrow \GL(V)$, where $V = k[G]$ is the group algebra. Now set
			$L=k(V)$ = the field of rational functions on $V$, and $K=L^G$.}.
	
	\begin{lemma} \label{lem.prel1} Let $G$ be an abstract finite group and $k$ be a field of characteristic $p \geqslant 0$.
		Then
		
		\smallskip
		(a) $\rd_k(G) = \rd_{k'}(G)$ for any field $k'$ of characteristic $p$.
		
		\smallskip
		(b)  If $H$ is a subgroup of $G$, then $\rd_k(H) \leqslant \rd_k(G)$. Moreover, if $G \neq 1$, then $\rd_k(G) \geqslant 1$.
		
		\smallskip
		(c) If $G$ is abelian, then $\rd_k(G) \leqslant 1$.
		
		\smallskip
		(d) If $1 \to A \to G \to B \to 1$ is an exact sequence, then
		$\rd_k(G) \leqslant \max \, \{ \rd_k(A), \; \rd_k(B) \}$. If additionally $A$ is a central subgroup of $G$, $B \neq 1$ and
		$\Char k\nmid |A|$, then $\rd_k(G) = \rd_k(B)$.
	\end{lemma}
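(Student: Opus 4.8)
The plan is to prove the four parts in order, relying throughout on three elementary features of the definitions. (i) Essential dimension of a field extension does not grow under base change: if $L/K$ is finite and $K\subseteq N$ sits with $K\subseteq L$ in a common overfield, then $\ed_k(LN/N)\le\ed_k(L/K)$ (descend $L/K$ to some $L_0/K_0$ and observe that $LN$ descends over the same $K_0$). (ii) Level only sees the top field: if $K\subseteq L'\subseteq L$ then $\lev_k(L'/K)\le\lev_k(L/K)$, since a tower whose top contains $L$ also contains $L'$. (iii) $\rd_k(G)$ is computed by a versal $G$-Galois extension, and level does not increase under specialization of such extensions. With these in hand, part (a) is immediate: a finite group is a constant group scheme over $\bZ$, so the assertion is exactly the base-field independence theorem of \cite{reichstein2022hilberts}. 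I would also record the easy half directly --- passing from the prime field $k_0$ to any $k\supseteq k_0$ can only lower $\ed_k$ by (i), hence lowers $\lev$ and $\rd$ --- and note that the reverse inequality is the subtler one, handled by spreading out a versal extension over a finitely generated base and specializing down to $k_0$.

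For (b), let $E/F$ be a versal $G$-Galois extension, so $\rd_k(G)=\lev_k(E/F)$ by (iii). For $H\le G$ the extension $E/E^H$ is $H$-Galois and again versal, whence $\rd_k(H)=\lev_k(E/E^H)$. Since $F=E^G\subseteq E^H\subseteq E$, I base-change a level-$\rd_k(G)$ tower for $E/F$ by $E^H$: replacing each step $F_i$ by $F_i\cdot E^H$ keeps every layer of essential dimension $\le\rd_k(G)$ by (i) and produces a tower over $E^H$ whose top still contains $E$, giving $\rd_k(H)=\lev_k(E/E^H)\le\rd_k(G)$. For the positivity statement, any nontrivial $G$ contains a subgroup $\bZ/\ell$ of prime order, and an explicit $\bZ/\ell$-Galois extension (Kummer if $\ell\ne\Char k$, Artin--Schreier if $\ell=\Char k$) has essential dimension $1$ and cannot be reached by essential-dimension-zero steps, so $\rd_k(G)\ge\rd_k(\bZ/\ell)\ge 1$.

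For (c), write a chain $1=G_0\triangleleft G_1\triangleleft\cdots\triangleleft G_m=G$ with each $G_i/G_{i-1}$ cyclic of prime order (possible since $G$ is abelian). Given an abelian $G$-Galois extension $L/K$, the fixed fields $K=L^{G_m}\subseteq\cdots\subseteq L^{G_0}=L$ form a tower of cyclic prime-degree Galois layers. After harmlessly adjoining the relevant roots of unity as essential-dimension-zero steps, each layer becomes a Kummer extension (respectively an Artin--Schreier extension, in the equal-characteristic case) of essential dimension $\le 1$; interleaving these with the root-of-unity adjunctions gives a tower with all steps of essential dimension $\le 1$ whose top contains $L$. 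Hence $\lev_k(L/K)\le 1$ for every abelian $G$-Galois extension, i.e. $\rd_k(G)\le 1$.

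For (d), set $E=L^A$, so that $E/K$ is $B$-Galois and $L/E$ is $A$-Galois. Concatenating a level-$\rd_k(B)$ tower for $E/K$ with the $E$-to-$L$ tower for $L/E$ (base-changed up to the top of the first tower, using (i)) produces a tower for $L/K$ all of whose steps have essential dimension $\le\max\{\rd_k(A),\rd_k(B)\}$; this proves the first inequality. For the refined statement, centrality makes $A$ abelian, so $\rd_k(A)\le 1$ by (c), while $B\ne 1$ gives $\rd_k(B)\ge 1$ by (b); the first inequality then reads $\rd_k(G)\le\rd_k(B)$. The reverse inequality $\rd_k(B)\le\rd_k(G)$ is where the hypotheses ($A$ central, $\Char k\nmid|A|$) enter, and this is the step I expect to be the main obstacle. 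Given a $B$-Galois extension $M/K$, the obstruction to lifting it to a $G$-Galois extension lives in $H^2(K,A)$; using (a) I may assume $k$ contains enough roots of unity so that $A$ is a product of groups $\mu_n$ and $H^2(K,A)$ is a sum of $n$-torsion Brauer groups. By the Merkurjev--Suslin theorem this obstruction is a sum of symbol classes, each split by a Kummer extension of essential dimension $\le 1$; splitting them one at a time builds a field $K'\supseteq K$ reached by steps of essential dimension $\le 1$ over which $M\cdot K'$ lifts to an $H$-Galois extension $N/K'$ for some $H\le G$ surjecting onto $B$. Then $M\cdot K'\subseteq N$ gives $\lev_k(M\cdot K'/K')\le\rd_k(H)\le\rd_k(G)$ by (ii) and (b), and since $\rd_k(G)\ge 1$ we conclude $\lev_k(M/K)\le\rd_k(G)$, whence $\rd_k(B)\le\rd_k(G)$. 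The delicate points to pin down are the reduction to symbols and the bookkeeping ensuring that $N/K'$ genuinely recovers $M\cdot K'$ as its $B$-quotient.
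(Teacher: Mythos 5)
Your proposal takes a genuinely different route from the paper, whose proof of this lemma is almost entirely citation: part (a) is Theorem 1.2 of \cite{reichstein2022hilberts}; the subgroup inequality in (b) is Lemma 3.13 of \cite{farb-wolfson} (or Remark 10.5 of \cite{reichstein2022hilberts}); (c) is Corollary 3.4 of \cite{farb-wolfson} (or Example 10.6 of \cite{reichstein2022hilberts}); and (d) combines Theorem 3.3 of \cite{farb-wolfson} with Proposition 10.8(d) of \cite{reichstein2022hilberts}, the only genuine argument in the paper being the observation that positivity of $\rd_k$ (from (b)) upgrades the two inequalities $\rd_k(G)\leqslant\max\{\rd_k(B),1\}$ and $\rd_k(B)\leqslant\max\{\rd_k(G),1\}$ to the stated equality. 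You instead reconstruct the underlying arguments. Your (c) and your concatenation argument for the first inequality of (d) are exactly the standard proofs behind the cited results. Your (b) routes through versal $G$-Galois extensions; this is legitimate, but note that your ``fact (iii)'' (that a versal extension computes $\rd_k(G)$, i.e.\ that level does not increase under specialization) is itself a substantive theorem of the same literature, so you have not made the proof self-contained, only shifted which black box is invoked. Your reverse inequality in (d) --- the embedding-problem obstruction in $H^2(K,A)$, the identification $A\cong\prod_j\mu_{n_j}$ using $\Char k\nmid|A|$ and part (a), and Merkurjev--Suslin to kill the obstruction by a tower of Kummer steps of essential dimension $\leqslant 1$ --- is far heavier machinery than the paper invokes, but its structure is sound, including your handling of non-surjective lifts: an $H$-Galois solution with $H\leqslant G$ suffices, since your fact (ii) and part (b) still bound the level, and this also absorbs the possible shrinking of the Galois group of $MK'/K'$ that you flag as ``bookkeeping.''

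The one genuine gap is the positivity claim in (b). You assert that an explicit Kummer or Artin--Schreier $\mathbb{Z}/\ell\mathbb{Z}$-extension ``cannot be reached by essential-dimension-zero steps,'' but that is precisely the point requiring proof, and it is not obvious: over a non-algebraically-closed field $k$, steps of essential dimension zero are far from trivial, and nontrivial Galois extensions of level $0$ abound (for $k=\mathbb{Q}$, the extension $\mathbb{Q}(\sqrt{2})/\mathbb{Q}$ has level $0$). You must either first use part (a) to replace $k$ by its algebraic closure --- the paper's route, since over $\overline{k}$ every extension of essential dimension $0$ is trivial, so nontrivial extensions have level $\geqslant 1$ (Lemma 4.5 of \cite{reichstein2022hilberts}) --- or argue directly, e.g.\ by a ramification argument at $t=0$, that every field in a tower of essential-dimension-zero steps over $k(t)$ is a constant-field extension $L(t)$ and hence cannot contain $t^{1/\ell}$. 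Either repair is short, but as written the crucial step is an unsupported assertion. A smaller instance of the same looseness occurs in your ``fact (i)'': the compositum $LN$ need not descend ``over the same $K_0$,'' because $L_0\otimes_{K_0}N$ need not be a field; one must enlarge $K_0$ inside $N$ without increasing transcendence degree, which is the standard (but not entirely trivial) base-change lemma for essential dimension of finite extensions.
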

	
	\begin{proof} Part (a) is Theorem 1.2 in~\cite{reichstein2022hilberts}. 
		
		(b) For the inequality $\rd_k(H) \leqslant \rd_k(G)$, see
		Lemma 3.13 in \cite{farb-wolfson} or Remark 10.5 in \cite{reichstein2022hilberts}.
		To prove the inequality $\rd_k(G) \geqslant 1$ we may replace $k$ by its algebraic closure; see part (a). 
		In the case, where $k$ is algebraically closed, $\lev_k(L/K) \geqslant 1$ for every non-trivial extension $L/K$; 
		see Lemma 4.5 in~\cite{reichstein2022hilberts}. Thus $\rd_k(G) \geqslant 1$ for every non-trivial group $G$.
		
		For (c), see Corollary~3.4 in \cite{farb-wolfson} or Example 10.6 in \cite{reichstein2022hilberts}.\footnote{Note that \cite{farb-wolfson} assumes that $\Char(k) = 0$. In~\cite{reichstein2022hilberts}, $k$ is allowed to be of arbitrary characteristic.} 
		
		For the first inequality in (d), see Theorem 3.3 in \cite{farb-wolfson} or Proposition 10.8(a) in~\cite{reichstein2022hilberts}. 
		In the case, where $A$ is central, Proposition 10.8(d) in~\cite{reichstein2022hilberts}
		tells us that 
		\begin{equation} \label{e.10.8d}
			\text{$\rd_k(G)\leqslant \max\{\rd_k(B),1\}$ and $\rd_k(B)\leqslant \max\{\rd_k(G),1\}$.}
		\end{equation}
		By our assumption $B \neq 1$ and hence, $G \neq 1$. By part (b),
		$\rd_k(B) \geqslant 1$, $\rd_k(G) \geqslant 1$. Now the inequalities \eqref{e.10.8d} translate to
		$\rd_k(G) = \rd_k(B)$.
	\end{proof}
	
	For notational simplicity, we will write $\rd_p(G)$ in place of $\rd_k(G)$, where $p = \Char(k)$ is either $0$ or a prime. This notation
	makes sense in view of Lemma~\ref{lem.prel1}(a). As we mentioned in the Introduction,
	\begin{equation} \label{e.thm1.3} \text{$\rd_0(G) \geqslant \rd_p(G)$ for any $p >0$;}
	\end{equation}
	see \cite[Theorem 1.3]{reichstein2022hilberts}.
	
	\begin{corollary} \label{cor.An-vs-Sn} $\rd_p(\Alt_n) = \rd_p(\Sym_n)$ for every $p \geqslant 0$ and every $n \geqslant 3$.
	\end{corollary}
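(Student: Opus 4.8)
The plan is to exploit the index-two inclusion $\Alt_n \trianglelefteq \Sym_n$ together with the structural results assembled in Lemma~\ref{lem.prel1}, obtaining the two opposite inequalities separately and then combining them. The central tool is the short exact sequence
\begin{equation*}
	1 \to \Alt_n \to \Sym_n \to \bZ/2\bZ \to 1,
\end{equation*}
which exhibits $\Sym_n$ as an extension with normal subgroup $A = \Alt_n$ and quotient $B = \bZ/2\bZ$.

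First I would establish $\rd_p(\Sym_n) \leqslant \rd_p(\Alt_n)$. Applying the first inequality of Lemma~\ref{lem.prel1}(d) to the exact sequence above gives $\rd_p(\Sym_n) \leqslant \max\{\rd_p(\Alt_n),\, \rd_p(\bZ/2\bZ)\}$. Since $\bZ/2\bZ$ is abelian, Lemma~\ref{lem.prel1}(c) yields $\rd_p(\bZ/2\bZ) \leqslant 1$. On the other hand, for $n \geqslant 3$ the group $\Alt_n$ is nontrivial, so Lemma~\ref{lem.prel1}(b) gives $\rd_p(\Alt_n) \geqslant 1 \geqslant \rd_p(\bZ/2\bZ)$. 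Hence the maximum on the right-hand side equals $\rd_p(\Alt_n)$, and the desired inequality follows.

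For the reverse direction, I would simply invoke Lemma~\ref{lem.prel1}(b) with the subgroup $\Alt_n \leqslant \Sym_n$, which immediately gives $\rd_p(\Alt_n) \leqslant \rd_p(\Sym_n)$. Combining this with the inequality of the previous paragraph yields $\rd_p(\Alt_n) = \rd_p(\Sym_n)$, as claimed.

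I do not anticipate a genuine obstacle here, since every ingredient has already been recorded in Lemma~\ref{lem.prel1}; the only point requiring a moment's care is the hypothesis $n \geqslant 3$, which is exactly what guarantees $\Alt_n \neq 1$ and therefore allows the bound $\rd_p(\Alt_n) \geqslant 1$ to absorb the quotient's contribution. (For $n = 1, 2$ the group $\Alt_n$ is trivial while $\Sym_n$ need not be, so the statement genuinely fails and the restriction on $n$ is necessary.)
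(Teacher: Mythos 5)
Your proof is correct and is essentially identical to the paper's: both directions use Lemma~\ref{lem.prel1}(b) for $\rd_p(\Alt_n) \leqslant \rd_p(\Sym_n)$ and for the lower bound $\rd_p(\Alt_n) \geqslant 1$, and Lemma~\ref{lem.prel1}(c)--(d) applied to $1 \to \Alt_n \to \Sym_n \to \mathbb{Z}/2\mathbb{Z} \to 1$ for the reverse inequality. Your closing observation about why $n \geqslant 3$ is needed is a nice touch not spelled out in the paper, but the argument itself is the same.
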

	
	\begin{proof} By part (b) of Lemma~\ref{lem.prel1}, $1 \leqslant \rd_p(\Alt_n) \leqslant \rd_p(\Sym_n)$. It remains to prove the opposite inequality, $\rd_p(\Sym_n) \leqslant \rd_p(\Alt_n)$. Indeed,
		applying Lemma~\ref{lem.prel1}(d) to the natural exact sequence $1 \to \Alt_n \to \Sym_n \to \mathbb Z/ 2 \mathbb Z \to 1$, and remembering that
		$\rd_p(\mathbb Z/ 2 \mathbb Z) \leqslant 1$ by part (c), we obtain
		\[ \rd_p(\Sym_n) \leqslant \max \{ \rd_k(\Alt_n), \, \rd_p(\mathbb Z/ 2 \mathbb Z) \} = \max \{ \rd_p(\Alt_n), \, 1 \} = \rd_p(\Alt_n), \] 
		as desired.
	\end{proof} 
	
	\subsection{Generically free actions}
	Consider an algebraic variety $X$ equipped with the action of a finite group $G$ defined over a field $k$. 
	We will sometimes refer to such $X$ as a $G$-variety.
	We say that the $G$-action on $X$ is generically free if $G$ acts freely on a $G$-invariant dense open subvariety $U \subset X$ defined over $k$.
	In other words, we require that the stabilizer of every $\overline{k}$-point $u \in U$ should be trivial.
	Here $\overline{k}$ denotes the algebraic closure of $k$.
	
	Recall that the $G$-action on $X$ is called {\em faithful} if every non-trivial element of $G$ acts non-trivially on $X$. We record the following easy lemma for future reference.
	
	\begin{lemma} \label{lem.faithful} Let $G$ be a finite group and $X$ be a $G$-variety.
		
		\smallskip
		(a) A generically free $G$-action on $X$ is faithful. 
		
		\smallskip
		(b) If $X$ is irreducible, then the converse holds: A faithful $G$-action on $X$ is generically free.
	\end{lemma}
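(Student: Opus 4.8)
The plan is to prove both parts directly from the definitions, with part (a) by contraposition and part (b) by a fixed-locus construction that uses irreducibility in an essential way. Throughout, the key objects are the fixed loci $X^g = \{x \in X : g x = x\}$ for $g \in G$; since the $G$-action is defined over $k$, each $g$ gives a $k$-automorphism of $X$, and $X^g$ is the equalizer of $g$ and $\id$, hence a closed $k$-subvariety of $X$.

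For part (a), I would argue contrapositively. Suppose the action is \emph{not} faithful, so some non-trivial $g \in G$ acts trivially on $X$, i.e.\ $g$ fixes every point of $X$. Let $U \subset X$ be any $G$-invariant dense open $k$-subvariety on which $G$ is claimed to act freely. Since $U$ is a non-empty variety over $k$, it has a $\ol{k}$-point $u \in U$. Then $g \in \Stab(u)$, so $\Stab(u) \neq 1$, contradicting the freeness of the $G$-action on $U$. Hence a generically free action must be faithful.

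For part (b), assume $X$ is irreducible and the $G$-action is faithful. For each non-trivial $g$, faithfulness says $g$ acts as a non-identity automorphism of $X$; since $X$ is reduced, an automorphism fixing every point is the identity, so $X^g \neq X$, and by irreducibility $X^g$ is a \emph{proper} closed subset. I would then set
\[
	U = X \smallsetminus \bigcup_{1 \neq g \in G} X^g,
\]
and verify that $U$ does the job. First, $U$ is open and defined over $k$, being the complement of a finite union of closed $k$-subvarieties. Second, $U$ is dense: an irreducible variety is not a union of finitely many proper closed subsets, so $\bigcup_{g \neq 1} X^g$ is a proper closed subset and $U \neq \varnothing$. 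Third, $U$ is $G$-invariant: for $h \in G$ one checks $h \cdot X^g = X^{h g h^{-1}}$, so left multiplication by $h$ permutes the fixed loci $\{X^g\}_{g \neq 1}$ among themselves and preserves their union. Finally, by construction no non-trivial $g$ fixes any point of $U$, so every $\ol{k}$-point of $U$ has trivial stabilizer and $G$ acts freely on $U$; that is, the action is generically free.

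The arguments are elementary, so there is no deep obstacle; the points that require care are the bookkeeping in part (b)---namely checking that $U$ is simultaneously $G$-invariant \emph{and} defined over $k$ (handled by the relation $h \cdot X^g = X^{h g h^{-1}}$ and by $X^g$ being a $k$-subvariety), and the use of irreducibility to guarantee that the finite union $\bigcup_{g \neq 1} X^g$ is still a proper subset, which is exactly where the hypothesis of part (b) enters and where the converse would fail without it.
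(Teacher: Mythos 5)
Your proof is correct and follows essentially the same route as the paper's: both rest on the fixed loci $X^g$ being closed and on irreducibility preventing $X$ from being a finite union of proper closed subsets. The only difference is presentational---the paper runs part (b) contrapositively (not generically free $\Rightarrow$ $X$ covered by the $X^g$ $\Rightarrow$ $X=X^{g_0}$ $\Rightarrow$ not faithful), while you argue directly and spell out the bookkeeping ($U$ defined over $k$, $G$-invariance via $h\cdot X^g = X^{hgh^{-1}}$) that the paper leaves implicit.
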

	
	\begin{proof} Part (a) is obvious from the definition, because $X$ has a $\overline{k}$-point with trivial stabilizer.
		For part (b), assume the contrary: a $G$-action on $X$ is not generically free.
		This means that $X$ is covered by the fixed point loci $X^g$, where $g$ ranges over the non-identity elements of $G$. Since $X$ is irreducible, we conclude that $X = X^{g_0}$ for some $1 \neq g_0 \in G$.  
		The element $g_0$ then acts trivially on $X$, and thus the $G$-action on $X$ is not faithful.
	\end{proof}
	
	Note that part (b) may fail if $X$ is allowed to be reducible.
	For example, the natural action of $\Sym_n$ on a disjoint union of $n$ points, is faithful but not generically free.
	
	\begin{lemma} \label{lem.gen-free} Let $V$ be a finite-dimensional $k$-vector space of dimension $\geqslant 1$, 
		$G$ be a finite subgroup of $\PGL(V)$ and $X$ be an irreducible $G$-invariant hypersurface of degree $d \geqslant 2$
		in $\mathbb{P}(V)$. Then the $G$-action on $X$ is generically free.
	\end{lemma}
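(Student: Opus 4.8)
The plan is to reduce to faithfulness of the action via Lemma~\ref{lem.faithful}(b), and then to rule out a nontrivial element acting trivially on $X$ by analysing its fixed locus through the eigenspace decomposition of a linear lift.

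Since $X$ is irreducible, Lemma~\ref{lem.faithful}(b) reduces the claim to showing that the $G$-action on $X$ is \emph{faithful}, so I would argue by contradiction and suppose some $g \in G$ with $g \neq 1$ fixes $X$ pointwise. As this is a geometric condition, I would base change to $\overline{k}$ and choose a lift $\tilde g \in \GL(V)$ of $g \in \PGL(V)$; here $g \neq 1$ says precisely that $\tilde g$ is not scalar. The point is then to describe the fixed locus of $g$ on $\mathbb{P}(V)$: a point $[v]$ is fixed exactly when $v$ is an eigenvector of $\tilde g$, so this locus is $\bigcup_\lambda \mathbb{P}(V_\lambda)$, where $V_\lambda = \ker(\tilde g - \lambda\,\id)$ ranges over the eigenspaces of $\tilde g$. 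Since eigenspaces for distinct eigenvalues are linearly independent, this is a disjoint union of linear subspaces, each a \emph{proper} subspace of $V$ because $\tilde g$ is not scalar.

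By hypothesis $X$ lies in this fixed locus. The key input I would isolate is the elementary fact that an irreducible hypersurface of degree $d \geqslant 2$ is contained in no proper linear subspace $\mathbb{P}(W) \subsetneq \mathbb{P}(V)$: otherwise $\dim V - 2 = \dim X \leqslant \dim \mathbb{P}(W) \leqslant \dim V - 2$ would force $X = \mathbb{P}(W)$ to be a hyperplane of degree $1$, against $d \geqslant 2$. If $X$ is geometrically irreducible this finishes things at once: $X_{\overline{k}}$ is then irreducible of dimension $\dim V - 2$ and so lies in a single component $\mathbb{P}(V_\lambda)$ of the fixed locus, a proper linear subspace, contradicting the key fact.

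The step I expect to require the most care is the case in which $X$ is irreducible over $k$ but decomposes over $\overline{k}$. There each geometric component of $X$ still has dimension $\dim V - 2$, lies in some $\mathbb{P}(V_\lambda)$, and hence (by the dimension count in the key fact) must equal a hyperplane $\mathbb{P}(V_\lambda)$ with $\dim V_\lambda = \dim V - 1$. I would then invoke that $\tilde g$ has at most one eigenspace of codimension $1$, since distinct eigenspaces sum directly and two subspaces of dimension $\dim V - 1$ would span a space of dimension $2(\dim V - 1)$; this collapses all components onto a single hyperplane, so $X$ is a hyperplane of degree $1$, the desired contradiction. Granting this bookkeeping, no nontrivial $g$ can act trivially on $X$, so the action is faithful and, by Lemma~\ref{lem.faithful}(b), generically free.
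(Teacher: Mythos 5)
Your main line of argument --- reduce to faithfulness via Lemma~\ref{lem.faithful}(b), lift $g$ to $\tilde g \in \GL(V)$, identify the fixed locus of $g$ in $\mathbb{P}(V)$ with the union of the eigenspace projectivizations $\mathbb{P}(V_\lambda)$, and rule out containment of $X$ in a proper linear subspace by the dimension/degree count --- is exactly the paper's proof. (The paper phrases the reduction as ``$X$ is covered by the fixed loci, hence equals one of them'' rather than citing Lemma~\ref{lem.faithful}(b), but this is the same argument.) In the case where $X$ remains irreducible after base change to $\overline{k}$, your write-up is complete and correct; the paper handles this point by simply declaring ``we may assume $k$ is algebraically closed.''

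The gap is in the case you yourself flagged as delicate, where $X$ is irreducible over $k$ but geometrically reducible. Your key claim there --- that $\tilde g$ has at most one eigenspace of codimension $1$ --- rests on $2(\dim V - 1) > \dim V$, which holds only when $\dim V \geqslant 3$; the lemma, however, allows $\dim V = 2$. This is not repairable bookkeeping: for $\dim V = 2$ the statement with mere $k$-irreducibility is genuinely false. Take $k = \mathbb{Q}$, $X = V(x^2 - 2y^2) \subset \mathbb{P}^1$, and let $g \in \PGL_2(\mathbb{Q})$ be the class of $\begin{pmatrix} 0 & 2 \\ 1 & 0 \end{pmatrix}$, an element of order $2$. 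Then $X$ is $g$-invariant (the form transforms by the scalar $-2$), is irreducible over $\mathbb{Q}$ of degree $2$, and yet $g$ fixes both geometric points $[\pm\sqrt{2}:1]$ of $X$, so $\langle g \rangle$ acts trivially --- in particular not generically freely --- on $X$. The conclusion to draw is that ``irreducible'' in the lemma must be understood geometrically, i.e.\ after passage to $\overline{k}$; this is what the paper's ``WLOG $k$ is algebraically closed'' tacitly assumes, and it holds in all of the paper's applications, where irreducibility of $X$ is established via smoothness, an argument valid over $\overline{k}$. Under that reading, your geometrically irreducible case already finishes the proof and the problematic second case disappears. (A further caveat, even for $\dim V \geqslant 3$: over an imperfect field an irreducible hypersurface of degree $d \geqslant 2$ can be geometrically a $d$-fold hyperplane, e.g.\ $x^p - t y^p$ over $\mathbb{F}_p(t)$, so your final step ``all components collapse onto a single hyperplane, hence $X$ has degree $1$'' also implicitly uses geometric reducedness.)
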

	
	\begin{proof} We may assume without loss of generality that the base field $k$ is algebraically closed.
		Assume the contrary: the $G$-action on $X$ is not generically free.
		Then $X$ is covered by the union of the fixed point loci $\mathbb P(V)^g$, as
		$g$ ranges over $G \setminus \{ 1 \}$. Since $X$ is irreducible,
  $X \subset \mathbb P(V)^g$ for one particular $1 \neq g \in G$.
		
		Now observe that the fixed locus $\mathbb P(V)^g$ is a finite union of subvarieties of the form $\mathbb P(V_{\lambda})$,
		where $\tilde{g}$ is a preimage of $g$ in $\GL(V)$, $\lambda$ is an eigenvalue of $\tilde{g}$, and $V_{\lambda}$
		is the $\lambda$-eigenspace of $\tilde{g}$. Note that since $g \neq 1$ in $\PGL(V)$, $V_{\lambda} \subsetneq V$.
		Since $X$ is irreducible, $X \subset \mathbb P(V_{\lambda}) \subsetneq \mathbb P(V)$ for one particular $\lambda$. Since $X$ is a hypersurface, this is only possible if $X = \mathbb P(V_{\lambda})$ is a hypersurface of degree $1$. This contradicts our assumption that the degree $d$ of $X$ is $\geqslant 2$.
	\end{proof}
	
	%
	%
	\begin{lemma} \label{lem.prel2}
		Suppose $G$ is a subgroup of $\PGL_n(k)$ and there exists
		a $G$-invariant closed subvariety $X$ of $\mathbb P^n$ of degree $a$ and dimension $b \geqslant 1$ 
		(not necessarily smooth or irreducible). 
		Assume further that the $G$-action on $X$ is generically free. Then
		\[ \rd_k(G) \leqslant \, \max \{ b, \,  \rd_k(\Sym_a) \}, \] 
		where $\Sym_a$ denotes the symmetric group on $a$ letters.
	\end{lemma}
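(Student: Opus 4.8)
The plan is to bound $\lev_k(E/F)$ for an \emph{arbitrary} $G$-Galois extension $E/F$, since $\rd_k(G)$ is by definition the supremum of these and no versal extension need be singled out. Writing $\mathbb{P}^n = \mathbb{P}(V)$ and viewing the $G$-torsor $T = \Spec E$ over $F$, I would twist the entire picture by $T$: since $G \subset \PGL(V) = \Aut(\mathbb{P}(V))$ and $X$ is $G$-invariant, one obtains a twisted form $X_T = T \times^G X$ sitting inside the twisted projective space $P_T = T \times^G \mathbb{P}(V)$, both defined over $F$, with $X_T$ still of dimension $b$ and degree $a$ (these are geometric invariants, unchanged by twisting). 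Splitting $T$ then amounts to producing a suitable rational point of $X_T$, and I would reach it by a two-stage tower: a $\Sym_a$-stage yielding a rational point of $X_T$, followed by a dimension-$b$ stage converting that point into a full splitting of $T$. By Lemma~\ref{lem.prel1}(a) we may enlarge $k$, so I assume $k$ perfect and infinite.

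For the first stage, intersect $X_T$ with a general linear subspace of $P_T$ of codimension $b$. Since $\deg X_T = a$ and $\dim X_T = b$, this produces a zero-cycle $Z$ of degree $a$, which for a general section is \'etale (using that $X$ is generically smooth). The $a$ points of $Z$ are the roots of a degree-$a$ resolvent, so the Galois group of its splitting field is a subgroup $H \subseteq \Sym_a$; by Lemma~\ref{lem.prel1}(b), $\rd_k(H) \leqslant \rd_k(\Sym_a)$. Hence one can reach, by a sub-tower of level $\leqslant \rd_k(\Sym_a)$, a field $F_1$ over which a single point $y \in Z \subset X_T$ becomes rational.

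For the second stage, arrange the section to be general enough that $y$ lies in the open locus where $G$ acts freely on $X$ (its complement has dimension $< b$, so a general codimension-$b$ section avoids it). Then $y \in X_T(F_1)$ corresponds to a geometric point $\bar y \in X$ with trivial stabilizer, whose $G$-orbit, as a $G$-torsor, is isomorphic to $T$ over $F_1$; thus adjoining $\bar y$ trivializes $T$, giving $E \subseteq F_1(\bar y)$. Since $\bar y$ is a point of the $b$-dimensional variety $X$, the finite extension $F_1(\bar y)/F_1$ descends to the field generated by $\bar y$ over $k$, whose transcendence degree is at most $\dim X = b$; hence $\ed_k(F_1(\bar y)/F_1) \leqslant b$. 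Concatenating the two stages yields $\lev_k(E/F) \leqslant \max\{\rd_k(\Sym_a),\, b\}$, as required.

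The main obstacle is the first stage. The twisted space $P_T$ is a Severi--Brauer variety, so it need not carry linear subspaces of codimension $b$ defined over $F$, and consequently the zero-cycle $Z$ is not obviously available over $F$ itself. The delicate point is to produce this degree-$a$ resolvent, genuinely tied to the torsor $T$, over an auxiliary extension of $F$ whose own level is $\leqslant \rd_k(\Sym_a)$ (for instance over a rational extension, or by absorbing the choice of section into the $\Sym_a$-resolution). Equivalently, in a versal model one realizes $\rd_k(G)$ through a single generically free $G$-variety, e.g. the ambient $\mathbb{P}(V)$, on which the $G$-action is automatically generically free by Lemma~\ref{lem.faithful} since it is faithful on $X$ and $\mathbb{P}(V)$ is irreducible, and one must then know that this variety is versal. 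Once the degree-$a$ resolvent is produced at cost $\leqslant \rd_k(\Sym_a)$, the two geometric inputs $\deg X = a$ and $\dim X = b$ supply precisely the two terms of the bound.
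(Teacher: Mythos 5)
The paper gives no proof of this lemma---it cites \cite[Proposition 14.1(a)]{reichstein2022hilberts} and \cite[Proposition 4.11]{wolfson}---so your attempt must be judged on its own terms. Your two-stage skeleton (twist $X \subset \mathbb{P}(V)$ by the torsor $T$; cut the twist by a codimension-$b$ linear space to produce a degree-$a$ resolvent at cost $\rd_k(\Sym_a)$; then use a rational point in the free locus to descend the extension at cost $b$) is indeed the architecture of the cited proofs, and your second stage is essentially sound: because $G$ acts through $\PGL(V)(k)$, the coordinates of the entire $G$-orbit of $\bar y$ lie in the single field $k(\bar y)$ of transcendence degree $\leqslant b$, and generic freeness makes the $G$-action on that field faithful, which is exactly what the descent of $E\cdot F_1/F_1$ to a subfield of transcendence degree $\leqslant b$ requires.

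But the obstacle you flag at the end is a genuine gap, and as written it is fatal, not a deferred technicality: stage one needs codimension-$b$ linear subspaces of $P_T = {}^{T}\mathbb{P}(V)$ \emph{defined over $F$}, and $P_T$ is a Severi--Brauer variety which in general has no $F$-rational linear subspaces---indeed no $F$-points at all. Your proposed fallback (versality of the ambient $\mathbb{P}(V)$) is not an independent escape route: $\mathbb{P}(V)$ is weakly versal for $G \subset \PGL(V)$ precisely when every twist ${}^{T}\mathbb{P}(V)$ has an $F$-point, i.e.\ precisely when the Brauer obstruction always vanishes, so this restates the same problem. The missing idea is a reduction from projective to linear actions. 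Let $\tilde G \subset \SL(V)$ be the preimage of $G$, a finite central extension $1 \to A \to \tilde G \to G \to 1$ in which $A$ consists of scalar $(n+1)$-st roots of unity; note that $|A|$ is automatically prime to $\Char(k)$, so Lemma~\ref{lem.prel1}(d) (via \eqref{e.10.8d}) gives $\rd_k(G) \leqslant \max\{\rd_k(\tilde G), 1\}$, and it suffices to bound the level of $\tilde G$-Galois extensions $\tilde E/F$. For these, the twist of $\mathbb{P}(V)$ by the $\tilde G$-torsor is $\mathbb{P}\bigl({}^{\tilde T}V\bigr)$, the projectivization of an honest $F$-vector space (twisted forms of vector spaces are trivial, by Hilbert 90), hence a \emph{split} $\mathbb{P}^n_F$ with an abundant supply of $F$-rational codimension-$b$ linear subspaces. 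Your two stages then run verbatim on ${}^{\tilde T}X = {}^{T}X$ and split the induced $G$-extension $E = \tilde E^{A}$ at cost $\max\{b, \rd_k(\Sym_a)\}$; one further abelian step of level $\leqslant 1$ (Lemma~\ref{lem.prel1}(c)) splits $\tilde E$ itself, and since $b \geqslant 1$ all maxima collapse to $\max\{b, \rd_k(\Sym_a)\}$. Without this device (or an equivalent one), the proof is incomplete at exactly the point you identified.
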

	
	\begin{proof} See Proposition 14.1(a) in~\cite{reichstein2022hilberts} or 
		Proposition 4.11 in \cite{wolfson}.
	\end{proof}

	\subsection{Proof of Theorem~\ref{thm.main}(a)} In view of Corollary~\ref{cor.An-vs-Sn}
	it suffices to show that $\rd_3(\Alt_6) \leqslant 1$. Recall that 
		$\Alt_6 \simeq \operatorname{PSL}_2(9)$; see Page $4$ of~\cite{AtlasOfFiniteGroups}. Thus there exists a faithful action of $\Alt_6$ on the projective line
	$\mathbb P^1$ defined over the field $k = \mathbb F_9$. We now apply Lemma~\ref{lem.prel2}  with $G = \Alt_6$,
	$n = 1$ and $X = \mathbb P^1$. Here we view $X$ as a closed subvariety of $\mathbb P^1$ of degree $a = 1$ and
	dimension $b = 1$. Since $X$ is irreducible, the (faithful) $\Alt_6$-action on $X$ is automatically generically free; see
	Lemma~\ref{lem.faithful}(b).
	By Lemma~\ref{lem.prel2}  we conclude that 
	\[ \text{$\rd_3(\Alt_6) =\rd_k(\Alt_6) \leqslant \max \{ 1, \rd_k(\Sym_1) \} = 1$. \quad \quad} \]
	as desired. \qed

	\section{Resolvent Degree of finite symplectic and unitary groups}
	\label{sect.symplectic-unitary}
	
	Let $n$ be a positive integer, $q = p^r$ be a prime power, and $\mathbb F_q$ be the finite field with $q$ elements.  Recall that $\operatorname{U}_n(q)$ is defined as the subgroup of elements of $\GL_n(\mathbb F_{q^2})$ which preserve the hermitian form $h$ on $\mathbb F_{q^2}^n$ defined by the formula 
	\[ h \big( (x_1, \ldots, x_n), \, (y_1, \ldots, y_n) \big) \mapsto x_1 \overline{y_1} + \ldots + x_n \overline{y_n} . \]
	Here $\mathbb F_{q^2}/\mathbb{F}_q$ is a field extension of degree $2$, and
	$x \mapsto \overline{x} = x^q$ is the unique non-trivial automorphism of $\mathbb F_{q^2}$ over $\mathbb F_q$. The group $\SU_n(q)$ is the subgroup of elements of $U_n(q)$ of determinant $1$. We also define $\PSU_n(q)=\SU_n(q)/Z(\SU_n(q))$.
	
	The group $\Sp_n(q)$ is defined in a similar manner, as the subgroup of elements
	of $\GL_n(\mathbb F_q)$ which preserve the standard symplectic form $\omega$ on $(\mathbb F_q)^n$.
	Here $n$ is assumed to be even, $n = 2m$, and 
	\[ \omega \big( (x_1, \ldots, x_{2m}), \; (y_1, \ldots, y_{2m}) \big)   = (x_1 y_2 - x_2 y_1) + \ldots + (x_{2m-1} y_{2m} - x_{2m} y_{2m-1}). \]
	Note that every hermitian form on $\bbF_{q^2}^n$ is equivalent to $h$ and every symplectic form on $\bbF_q^n$
	is equivalent to  $\omega$. The group $\PSp_n(q)$ is defined by $\PSp_n(q)=\Sp_n(q)/Z(\Sp_n(q))$.
	
	\begin{proposition} \label{prop.unitary-symplectic} Let $n \geqslant 3$ and $q = p^r$ be a prime power. Then
		
		\smallskip
		(a) $\rd_p \big( \Sp_n(q) \big) \leqslant \; \max \, \{ n - 2, \; \rd_p(\Sym_{q + 1}) \}$, and
		
		\smallskip
		(b) $\rd_p \big( \U_n(q) \big) \leqslant \; \max \, \{ n - 2, \; \rd_p(\Sym_{q + 1}) \}$.
	\end{proposition}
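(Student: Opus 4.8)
The plan is to realize the relevant central quotient of each group as a finite subgroup of $\PGL_n$ acting on an explicit invariant hypersurface of degree $q+1$ and dimension $n-2$ inside $\bP^{n-1}$, and then to feed this into Lemma~\ref{lem.prel2}. Before constructing the hypersurfaces I would reduce to the projective groups. The kernel of the natural homomorphism $\U_n(q) \to \PGL_n(\bbF_{q^2})$ consists of the unitary scalar matrices $\lambda I$ with $\lambda^{q+1} = 1$, a central subgroup $Z$ of order $q+1$, which is coprime to $p$ because $q = p^r$. Since the image $\PGU_n(q) = \U_n(q)/Z$ is nontrivial for $n \geq 3$, Lemma~\ref{lem.prel1}(d) applied to $1 \to Z \to \U_n(q) \to \PGU_n(q) \to 1$ gives $\rd_p(\U_n(q)) = \rd_p(\PGU_n(q))$. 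Likewise the kernel of $\Sp_n(q) \to \PGL_n(\bbF_q)$ is $Z(\Sp_n(q)) = \{\pm I\}$, of order dividing $2$ and hence coprime to $p$, so $\rd_p(\Sp_n(q)) = \rd_p(\PSp_n(q))$. It therefore suffices to bound $\rd_p(\PGU_n(q))$ and $\rd_p(\PSp_n(q))$.

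For the unitary group I would take $X \subset \bP^{n-1}$ to be the Hermitian hypersurface $h(x,x) = 0$, that is $\sum_{i=1}^n x_i^{q+1} = 0$ (using $\ol{x_i} = x_i^q$). It is preserved by $\U_n(q)$, hence by $\PGU_n(q)$, and it has degree $q+1$ and dimension $n-2$. For the symplectic group the form $\omega$ is alternating, so $\omega(x,x) \equiv 0$ and cutting out a hypersurface requires a Frobenius twist: I would set $Q(x) = \omega\big(x, x^{(q)}\big) = \sum_{i=1}^m \big(x_{2i-1}x_{2i}^q - x_{2i}x_{2i-1}^q\big)$, where $x^{(q)} = (x_1^q, \ldots, x_n^q)$. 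Since every $g \in \Sp_n(q)$ has entries in $\bbF_q$, Frobenius commutes with its action, $(gx)^{(q)} = g\,x^{(q)}$, and therefore $Q(gx) = \omega(gx, g\,x^{(q)}) = \omega(x, x^{(q)}) = Q(x)$. Thus $X = \{Q = 0\} \subset \bP^{n-1}$ is a $\PSp_n(q)$-invariant hypersurface, again of degree $q+1$ and dimension $n-2$.

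Next I would verify that each $X$ is smooth over $\ol{k}$, and hence irreducible. This is where the hypothesis $q = p^r$ enters: in characteristic $p$ one has $q \equiv 0$, so the partials of $\sum_i x_i^{q+1}$ are $(q+1)x_i^q = x_i^q$, while those of $Q$ are $x_{2i}^q$ and $-x_{2i-1}^q$; in both cases the gradient vanishes only at the origin, so $X$ has no singular point in $\bP^{n-1}$. A smooth hypersurface in $\bP^{n-1}$ with $n-1 \geq 2$ is irreducible, and $\dim X = n-2 \geq 1$. Consequently Lemma~\ref{lem.gen-free}, applied to the finite subgroup $\PGU_n(q)$ (resp. $\PSp_n(q)$) of $\PGL_n$ and the irreducible invariant hypersurface $X$ of degree $q+1 \geq 2$, shows that the action on $X$ is generically free. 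Finally Lemma~\ref{lem.prel2} with $a = q+1$ and $b = n-2$ yields
\[ \rd_p(\PGU_n(q)) \leq \max\{n-2, \; \rd_p(\Sym_{q+1})\} \qquad\text{and}\qquad \rd_p(\PSp_n(q)) \leq \max\{n-2, \; \rd_p(\Sym_{q+1})\}, \]
which together with the reductions of the first paragraph establishes (a) and (b).

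The step I expect to be the main obstacle is the construction of the symplectic invariant. Because $\omega$ is alternating it gives nothing on the diagonal, and the algebraic group $\Sp_n$ acts transitively on $\bP^{n-1}$ (so it admits no invariant hypersurface); one must therefore exploit the finiteness of $\Sp_n(q)$ to produce a polynomial that is invariant only for the finite group. The Frobenius-twisted form $\omega(x, x^{(q)})$ is exactly what supplies this, and verifying its invariance hinges on $g$ having $\bbF_q$-rational entries. By comparison, the remaining ingredients---the Jacobian computation in characteristic $p$ and the coprimality of the central orders to $p$---are routine.
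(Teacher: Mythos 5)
Your proposal is correct and follows essentially the same route as the paper: the same reduction to the projective quotients via Lemma~\ref{lem.prel1}(d) applied to the central subgroups of scalars, the same two hypersurfaces $x_1^{q+1}+\ldots+x_n^{q+1}=0$ and $\omega(\mathbf{x},\mathbf{x}^q)=0$ of degree $q+1$, the same smoothness-forces-irreducibility argument for $n\geqslant 3$, and the same applications of Lemma~\ref{lem.gen-free} and Lemma~\ref{lem.prel2}. The only (minor) divergence is that you assert the $\U_n(q)$-invariance of the Hermitian polynomial without proof---it does hold, by a routine Frobenius/matrix computation using $\overline{g}^{\,T}g=I$---whereas the paper establishes it by observing that the difference $f(g\cdot\mathbf{x})-f(\mathbf{x})$ vanishes at every $\mathbb{F}_{q^2}$-point of $\mathbb{P}^{n-1}$ and invoking the degree bound of \cite{MR98}.
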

	
	\begin{proof} 
		We will use the following notational conventions: $x_1, \ldots, x_n$ will denote independent variables over $\bbF_q$, ${\bf x} := (x_1, \ldots, x_n)$ and ${\bf x}^q := (x_1^q, \ldots, x_n^q)$. 
		
		\smallskip
		(a) Consider the homogeneous polynomial 
		\[ f({\bf x}) = \omega \big( {\bf x} , \; {\bf x}^q) =  (x_1 x_2^q - x_2 x_1^q) + \ldots + (x_{2m-1} x_{2m}^q - x_{2m} x_{2m-1}^q) \in \bbF_q[x_1, \ldots, x_n] \]
		 of degree $q+1$. A simple application of the Jacobian criterion shows that $f({\bf x})$ cuts out a smooth
   hypersurface in $\mathbb{P}^{n-1}$. Denote this smooth hypersurface by $X$. Since $n \geqslant 3$, $X$ has to be irreducible; otherwise irreducible components of $X$ would intersect non-trivially, and their intersection point would be singular on $X$.
   
		We are now ready to complete the proof of part (a). Applying Lemma~\ref{lem.prel1}(d) to the central exact sequence 
		\begin{equation} \label{e.PSp1} 1 \to Z \to \Sp_n(q) \to G \to 1, 
		\end{equation}
		where $G$ is the image of $\Sp_{n}(q)$ under the natural projection $\GL_n(\mathbb F_q) \to \PGL_n(\mathbb F_q)$ 
		and $Z = \{ \pm 1 \}$ is the subgroup of scalar matrices in $\Sp_n(q)$, we obtain $\rd_p(\Sp_n(q)) = \rd_p(G)$.
		On the other hand, $G \subset \PGL_n(\mathbb F_q)$ acts on the irreducible hypersurface 
		$X$ of degree $q + 1 > 2$. By Lemma~\ref{lem.gen-free}, 
		the $G$-action on $X$ is generically free. 
		Applying Lemma~\ref{lem.prel2}, we obtain
		\begin{equation} \label{e.PSp3}
			\rd_p(\Sp_n(q)) = \rd_{\bbF_q} (G) \leqslant \; \max \, \{ n - 2, \; \rd_{\mathbb F_q}(\Sym_{q + 1}) \},
		\end{equation}
		as desired.
%
		
		\smallskip
		(b) We apply a similar argument to the polynomial
		\[ f({\bf x}) = h({\bf x}, {\bf x}) = x_1^{q+1} + \ldots + x_n^{q + 1} \in \mathbb F_q[x_1, \ldots, x_n] \]
		of degree $q+1$. Let $X \subset \mathbb P^{n-1}$ be the hypersurface cut out by $f({\bf x})$.
             Once again, $X$ is smooth by the Jacobian criterion, and since $n \geqslant 3$, this allows us to conclude that
             $X$ is irreducible. We claim that $f({\bf x})$ (and hence, $X$) is invariant under the natural action of $\U_n(q)$. Choose $g \in \U_n(q)$. Our goal is to prove that
		\[ \Delta({\bf x}) := f(g \cdot {\bf x}) - f({\bf x}) \in \bbF_q[x_1, \ldots, x_n] \]
		is the zero polynomial.
		Indeed, for
 every ${\bf a} = (a_1, \ldots, a_n) \in \bbF_{q^2}^n$, we have  
		\[ f(g \cdot {\bf a}) = h( g \cdot {\bf a}, \, g \cdot {\bf a}) = h( {\bf a}, \, {\bf a}) = f({\bf a}). \]
		  We conclude that $\Delta({\bf x})$ is a homogeneous polynomial of degree $q + 1$ which 
		vanishes at every $\mathbb F_{q^2}$-point of $\mathbb P^{n-1}$. By~\cite{MR98}, the minimal degree of any non-zero polynomial with this property is $q^2 + 1$. This tells us that $\Delta({\bf x})$ is the zero polynomial, thus completing the proof of the claim.
		
		To finish the proof of part (b), we argue as in part (a). Consider the central exact sequence  
			\[ 1 \to Z \to \U_n(q) \to G \to 1, \]
			where $G$ is the image of $\U_{n}(q)$ under the natural projection $\GL_n(\mathbb F_{q^2}) \to \PGL_n(\mathbb F_{q^2})$,
			and $Z$ is the abelian subgroup of scalar matrices in $\U_n(q)$. By Lemma~\ref{lem.prel1}(d),  
			\begin{equation} \label{e.Un2}
				\rd_p(\U_n(q)) = \rd_p(G) 
			\end{equation}
		On the other hand, $G \subset \PGL_n(\mathbb F_{q^2})$ acts on the irreducible hypersurface 
		$X$ of degree $q + 1 \geqslant 2$ cut out by $f({\bf x})$ in $\mathbb P^{n-1}$.
		By Lemma~\ref{lem.gen-free}, the $G$-action on $X$ is generically free. 
		Thus 
		\begin{equation} \label{e.Un3}
			\rd_p(G) = \rd_{\bbF_{q^2}}(G) \leqslant \; \max \, \{ n - 2, \; \rd_{\mathbb F_{q^2}}(\Sym_{q + 1}) \} =
			\max \, \{ n - 2, \; \rd_p(\Sym_{q + 1}), \} 
		\end{equation}
		where the first and the last equalities follow from Lemma~\ref{lem.prel1}(a), and the inequality 
		in the middle from Lemma~\ref{lem.prel2} .
		Combining~\eqref{e.Un2} and \eqref{e.Un3}, we arrive at the inequality of part (b).
	\end{proof}
	
	\section{Proof of Theorems~\ref{thm.main}(b-c) and~\ref{thm.E6} }
	\label{sect.main-cd}
	Recall that given finite groups $G,H$, central subgroups $Z_1\subset Z(G)$, $Z_2\subset Z(H)$ and an isomorphism $\varphi:Z_1\to Z_2$, we may construct the central product $G\circ_{\varphi} H$ as the quotient $(G\times H)/N$, where $N$ is the normal subgroup
	\begin{equation*}
		\{(g,h)\in G\times H:g\in Z_1,h\in Z_2,\textrm{ and }\varphi(g)h=1\}.
	\end{equation*}
	Note that the natural maps $G\to G\circ_{\varphi}H$ and $H\to G\circ_{\varphi}H$ are injective. When the subgroups $Z_1,Z_2$ and the isomorphism $\varphi$ are clear from the context, we write the central product as $G\circ H$.
	\begin{proof}[Proof of Theorems~\ref{thm.main}(b)]
		By Lemma ~\ref{lem.prel1}(a), we may work over $k=\mathbb{F}_3$. In view of Corollary~\ref{cor.An-vs-Sn}, it suffices to show that
		$\rd_{\mathbb F_3}(\Alt_7) \leqslant 2$.
		By Table 8.11 of \cite{bray_holt_roney-dougal_2013} we have an inclusion $\mathbb{Z}/4\mathbb{Z}\circ (2\cdot \Alt_7)\subset \SU_4(3)$, which induces an inclusion $2\cdot \Alt_7\subset U_4(3)$. Consequently, Proposition ~\ref{prop.unitary-symplectic}(b) implies that
		\begin{equation*}
			\rd_{\mathbb F_3}(\Alt_7) =	\rd_{\mathbb{F}_3}(2\cdot \Alt_7)\leqslant
			\rd_{\mathbb{F}_3}(U_4(3)) \leqslant \max\{4-2,\rd_{\mathbb{F}_3}(\Sym_4)\} \leqslant \max \{ 2, \rd_{\mathbb C}(\Sym_4) \} = 2.
		\end{equation*}
		Here the first equality follows from Lemma ~\ref{lem.prel1}(d), applied to the central extension $0\to \mathbb{Z}/2\mathbb{Z}\to 2\cdot \Alt_7\to \Alt_7\to 1$. The first inequality follows from Lemma~\ref{lem.prel1}(b) with $H = 2 \cdot \Alt_7$ and $G = U_4(3)$, the second inequality from Proposition ~\ref{prop.unitary-symplectic}(b), and the third inequality from~\eqref{e.thm1.3}. The equality on the right follows from 
		the fact that $\rd_{\mathbb C}(\Sym_4) = 1$; see \cite[Example 10.8]{reichstein2022hilberts} or \cite[Corollary 3.4]{farb-wolfson}.
	\end{proof}
	
	\begin{proof}[Proof of Theorems~\ref{thm.main}(c)]	
		The inequality $\rd_5(\Sym_6) \leqslant 2$ follows from~\eqref{e.thm1.3} and~\eqref{e.upper-bounds}. Moreover, $\rd_5(\Sym_6) \leqslant \rd_5(\Sym_7)$ by Lemma~\ref{lem.prel1}(b), and 
		$\rd_5(\Sym_7) = \rd_5(\Alt_7)$ by Corollary~\ref{cor.An-vs-Sn}.
		It thus suffices to show that $\rd_k(\Alt_7) \leqslant \rd_k(\Sym_6)$, where $k=\mathbb{F}_5$.
		By Table 8.6 of \cite{bray_holt_roney-dougal_2013} there is an inclusion $3\cdot \Alt_7\subset \SU_3(5)\subset U_3(5)$.
		Thus Proposition ~\ref{prop.unitary-symplectic}(b) shows that
		\[
		\rd_{\mathbb F_5}(\Alt_7) = 
		\rd_{\mathbb{F}_5}(3 \cdot \Alt_7)\leqslant \rd_{\mathbb F_3}(U_3(5)) \\
		\leqslant \max\{3-2,\rd_{\mathbb{F}_5}(\Sym_6)\}= \rd_{\mathbb{F}_5}(\Sym_6),
		\]
		as desired. Here the first equality follows from Lemma ~\ref{lem.prel1}(d), applied to the central extension $0\to \mathbb{Z}/3\mathbb{Z}\to 3\cdot \Alt_7\to \Alt_7\to 1$. The first inequality follows from Lemma~\ref{lem.prel1}(b) with
		$H = 3 \cdot \Alt_7$ and $G = U_3(5)$ and the second from  Proposition ~\ref{prop.unitary-symplectic}(b). The equality on the right
		follows the second part of Lemma~\ref{lem.prel1}(b), which tells us that $\rd_5(\Sym_6) \geqslant 1$. 
	\end{proof}

	\begin{proof}[Proof of Theorem~\ref{thm.E6}] We claim that  
		\begin{equation} \label{e.E6} \rd_p(W(E_6)) = \rd_p(\SU_4(2))
		\end{equation}
		for any $p \geqslant 0$. This follows from the exact sequence $1\to \SU_4(2)\to W(E_6)\to \mathbb{Z}/2\mathbb{Z}\to 0$; see Page $26$ of ~\cite{AtlasOfFiniteGroups}. Indeed, $\rd_p(\SU_4(2)) \leqslant \rd_p(W(E_6))$ by Lemma~\ref{lem.prel1}(b). On the other hand, 
		\[ \rd_p(W(E_6)) \leqslant	 \max \{ \rd_p(\SU_4(2)), \rd_p(\mathbb Z/ 2 \mathbb Z) \} =
		\max \{ \rd_p(\SU_4(2)), 1 \} = \rd_p(\SU_4(2)) \]
		by Lemma~\ref{lem.prel1}(b), (c) and (d).
		
		\smallskip
		$p = 2$. By Proposition~\ref{prop.unitary-symplectic}(b), 
		\[
		\rd_2(\SU_4(2)) 
		\leqslant \rd_2(\U_4(2))\leqslant \max\{4-2,\rd_2(\Sym_3) \} = 2.
		\]
		Combining this with \eqref{e.E6}, we obtain the desired inequality, $\rd_2 (W(E_6)) \leqslant 2$.
		
		\smallskip
		$p = 3$. Here we use the exceptional isomorphism $\SU_4(2)\cong \PSp_4(3)$; see Page $26$ of \cite{AtlasOfFiniteGroups}. 
		Combining \eqref{e.E6} and Proposition~\ref{prop.unitary-symplectic}(a) we obtain
		\begin{equation*}
			\rd_3(W(E_6)) =	\rd_{3}(\SU_4(2)) = \rd_3(\PSp_4(3)) = \rd_{\mathbb{F}_3}(\Sp_4(3))\leqslant \max\{4-2,\rd_{\mathbb{F}_3}(\Sym_4)\}\leqslant 2.
		\end{equation*}
		Here the equality $\rd_3(\PSp_4(3)) = \rd_3(\Sp_4(3))$ follows from Lemma~\ref{lem.prel1}(d), because
			$\Sp_4(3)$ is a central extension of $\PSp_4(3)$.
		
		\smallskip
		$p = 5$. Table 8.11 of \cite{bray_holt_roney-dougal_2013} gives an inclusion 
		$2\cdot \SU_4(2)\subset \SU_4(5)\subset \U_4(5)$. By Lemma~\ref{lem.prel1}(a), we have $\rd_5(2 \cdot \SU_4(2)) = \rd_5(\SU_4(2))$.
			Combining this with~\eqref{e.E6} with Proposition ~\ref{prop.unitary-symplectic}(b), we obtain
			\begin{equation*}
				\rd_5(W(E_6))=\rd_5(2 \cdot \SU_4(2)) \leqslant \rd_{\mathbb{F}_5}(U_4(5))\leqslant \max\{4-2,\rd_{\mathbb{F}_5}(\Sym_6)\}\leqslant \max\{2,\rd_{\mathbb{C}}(\Sym_6)\}\leqslant 2,
			\end{equation*} 	 
			where the inequality on the right follows from \eqref{e.upper-bounds}.
	\end{proof}

	\section{The varieties $Y_{123}$}
	\label{sect.123}
	
	Let $n$ be a positive integer.
	We define the closed subvariety $X_{123}$ of $\mathbb A^n$ by 
	\[ X_{123} : = \{ (x_1, \ldots , x_{n}) \in \mathbb A^{n} \; | \; s_{1} (x_1, \ldots, x_n) = s_2(x_1, \ldots, x_n) = s_3(x_1, \ldots, x_n) = 0) \}. \] 
	Here $s_j(x_1, \ldots, x_n)$ denotes the $j$th elementary symmetric polynomial 
	in $x_1, \ldots, x_n$. We denote by $Y_{123}^{(n)}\subset \mathbb{P}^{n-1}$ the projective variety cut out by the same equations. 
	Note that $X_{123}$ and $Y_{123}$ depend on $n$, which is assumed to be fixed throughout.
	The symmetric group $\Sym_n$ acts on both $X_{123}$ and $Y_{123}$ by permuting the variables.
	
	\begin{lemma}\label{lem.123}
		Let $k$ be a field of characteristic $p \geqslant 0$ and $n\geqslant 7$ be a positive integer. Then
		
		\smallskip
		(a) the symmetric group $\Sym_n$ acts transitively on the irreducible components of $X_{123}$ (respectively $Y_{123}$), 
		each of which has dimension $n - 3$ (respectively, $n-4$).
		
		\smallskip 
		(b) The projective variety $Y_{123}$ is of degree $6$ in $\mathbb P^{n-1}$. It has either one or two irreducible components. 
		If there are two components, then odd permutations in $\Sym_n$ interchange them, and even permutation
		leave each component invariant.
		
		\smallskip
		(c) The $\Sym_n$-action on $Y_{123}$ is generically free.
		
		\smallskip
		(d) If $p > 0$ and $n = p^r$ is a power of $p$, then the projective variety $Y_{123}$ is a cone over the $\Sym_n$-fixed point 
		$ (1:1:\ldots:1)$ in $\mathbb{P}^{n-1}$. 
	\end{lemma}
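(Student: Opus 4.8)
The plan is to read off all four parts from the finite morphism
\[ \pi\colon X_{123}\longrightarrow \mathbb A^{n-3},\qquad (x_1,\dots,x_n)\longmapsto (s_4,\dots,s_n). \]
Since $s_1=s_2=s_3=0$ on $X_{123}$, each $x_i$ is a root of the monic polynomial $t^{n}+\sum_{j\geqslant 4}(-1)^{j}s_j\,t^{n-j}$, so $\pi$ is finite; being finite and dominant, it is surjective. Hence $\dim X_{123}=n-3$ and $s_4,\dots,s_n$ are algebraically independent on $X_{123}$, so $s_1,s_2,s_3$ is a regular sequence and $X_{123}$ is a Cohen--Macaulay complete intersection, equidimensional of dimension $n-3$. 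As the general polynomial of the above shape has distinct roots (solve $s_1=s_2=s_3=0$ in three of the variables to produce one such point), the generic point of every component lies over the distinct-root locus, so $X_{123}$ is generically reduced, hence reduced. Since $X_{123}$ is the affine cone over $Y_{123}$, this yields the dimension claims of (a) and, by B\'ezout for the complete intersection of degrees $1,2,3$, the degree-$6$ assertion of (b).

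For transitivity, restrict $\pi$ over the discriminant-nonzero locus $U\subset\mathbb A^{n-3}$: there $\pi$ becomes a finite \'etale cover on which $\Sym_n$ acts simply transitively on geometric fibres (the $n!$ orderings of the distinct roots), i.e. an $\Sym_n$-torsor. Over $\overline k$ its components therefore correspond to the cosets $\Sym_n/M$, where $M$ is the geometric monodromy group, with $\Sym_n$ permuting them transitively by left translation; taking closures and projectivizing gives part (a) for $X_{123}$ and $Y_{123}$. The number of components is the index $[\Sym_n:M]$; since $Y_{123}$ is reduced of degree $6$ and its components form one $\Sym_n$-orbit (hence all have equal degree), this index is at most $6<n$. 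But for $n\geqslant 5$ every subgroup of $\Sym_n$ of index $<n$ contains $\Alt_n$, so $[\Sym_n:M]\in\{1,2\}$, and when it equals $2$ the stabilizer of a component is the unique index-$2$ subgroup $\Alt_n$. This gives (b): even permutations preserve each component and odd ones interchange them.

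For (c) I would bound the locus of points with nontrivial stabilizer by
\[ \bigcup_{\sigma\neq 1}\ \bigcup_{\lambda}\ \bigl(Y_{123}\cap\mathbb P(V^{\sigma}_{\lambda})\bigr), \]
where $\lambda$ runs over the finitely many eigenvalues of the permutation matrix $\sigma$ and $V^{\sigma}_{\lambda}$ is the $\lambda$-eigenspace. Each $V^{\sigma}_{\lambda}$ lies in a hyperplane $\{x_a-\lambda x_b=0\}$ with $a\neq b$, so it suffices to show that no irreducible component of $Y_{123}$ lies in such a hyperplane. Because the affine cone $X_{123}$ is reduced, the homogeneous ideal of $Y_{123}$ is exactly $(s_1,s_2,s_3)$, whose degree-$1$ part is $k\,s_1$ and whose degree-$2$ part is spanned by $s_2$ and the $s_1 x_i$. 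If $Y_{123}$ is irreducible this already forbids $x_a-\lambda x_b\in k\,s_1$ for $a\neq b$ and $n\geqslant 3$. If $Y_{123}$ has two components, then containment of one of them in $\{x_a-\lambda x_b=0\}$ would, after applying the transposition $(a\,b)$ that interchanges the two components, force the quadric $(x_a-\lambda x_b)(x_a-\lambda^{-1}x_b)$ into $(s_1,s_2,s_3)_2$; comparing coefficients (here I use $n\geqslant 7$) shows this is impossible. Hence the displayed locus is a proper closed subset of each component, and the $\Sym_n$-action on $Y_{123}$ is generically free.

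Part (d) is a direct computation: for $n=p^{r}$ Kummer's theorem gives $\binom{n}{1}\equiv\binom{n}{2}\equiv\binom{n}{3}\equiv 0\pmod p$, so $\mathbf 1=(1:\cdots:1)$ lies on $Y_{123}$. Writing $f(t)=\prod_i(t-x_i)$, the elementary symmetric functions of $x+\lambda\mathbf 1$ are the coefficients of $f(t-\lambda)$; expanding, the new $s_1,s_2,s_3$ are $k$-linear combinations of $\binom{n}{1},\binom{n}{2},\binom{n}{3}$ and of $s_1,s_2,s_3$, all of which vanish, so $X_{123}$ is stable under translation along $\mathbf 1$ and $Y_{123}$ is a cone with vertex $\mathbf 1$. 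I expect the main obstacle to be the positive-characteristic phenomenon responsible for the dichotomy in (b): the geometric monodromy $M$ can drop from $\Sym_n$ to $\Alt_n$ (exactly when $\sqrt{\disc}$ becomes a rational function), so the whole argument must treat the irreducible and the two-component cases uniformly. The genuinely delicate points are the identification of the low-degree parts of the ideal of $Y_{123}$ used in (c), which is what makes generic freeness survive in the reducible case.
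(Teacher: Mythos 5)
Your proposal is correct in substance, and for parts (a), (b), (d) it follows essentially the same skeleton as the paper: the finite map $\pi\colon X_{123}\to\mathbb A^{n-3}$ gives equidimensionality and transitivity, B\'ezout gives degree $6$, the bound (number of components)$\,\leqslant 6<n$ together with the fact that a subgroup of $\Sym_n$ of index $<n$ contains $\Alt_n$ (for $n\geqslant 5$) gives the dichotomy in (b), and (d) is the same binomial computation. One difference even there: the paper gets transitivity in (a) from the geometric quotient $X_{123}/\Sym_n\simeq\mathbb A^{n-3}$ -- set-theoretically, fibres of $\pi$ are $\Sym_n$-orbits because two $n$-tuples with the same elementary symmetric functions are permutations of one another -- which requires no \'etale locus at all, whereas your torsor/monodromy argument does. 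Part (c) is where you genuinely diverge: the paper argues that a failure of generic freeness would force the simple group $\Alt_n$ to act trivially on each component, then a character argument forces $y_1=\cdots=y_n$, contradicting $\dim Y_{123}=n-4\geqslant 3$; you instead bound the non-free locus by the hyperplanes $\{x_a-\lambda x_b=0\}$ containing eigenspaces of permutation matrices and rule out containment of a component by computing the degree-$1$ and degree-$2$ graded pieces of the ideal. Your route yields strictly more information (reducedness of $V(s_1,s_2,s_3)$, the monodromy description of the components, an explicit non-free locus), but it is heavier: it needs $(s_1,s_2,s_3)$ to be radical, something the paper's argument never uses.

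The one step you have not actually proved is the claim on which your reducedness -- and hence your version of (c) -- hangs: that the distinct-root locus $U\subset\mathbb A^{n-3}$ is non-empty in every characteristic, i.e.\ that there exists a separable monic polynomial of the form $t^n+a_4t^{n-4}+\cdots+a_n$. Your parenthetical ``solve $s_1=s_2=s_3=0$ in three of the variables'' produces a point of $X_{123}$ but does not show its coordinates are pairwise distinct, and small positive characteristic (the only case this paper cares about) is exactly where such degenerations occur. The claim is true, and the fix is short: if $p\nmid n$, take $t^n-1$, whose roots are the $n$ distinct $n$-th roots of unity; if $p\mid n$, take $t^n-t$, which is separable because then $p\nmid n-1$ (its roots are $0$ and the $(n-1)$-st roots of unity); both have the required shape since $n\geqslant 5$. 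With this inserted -- and with the observation, which you do make, that every component of $X_{123}$ dominates $\mathbb A^{n-3}$, so its generic point lies over $U$ -- your Cohen--Macaulay-plus-generically-reduced argument, the torsor description, and the ideal computation in (c) all go through; the coefficient comparison in the two-component case is correct as sketched (and in fact only needs $n\geqslant 4$).
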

	
	\begin{proof} (a) The ring of invariants
		$k[X_{123}]^{\Sym_n}$ is the free polynomial $k$-algebra generated by the elements $a_4, a_5, \ldots, a_n$, where
		$a_j=s_j(x_1,\ldots,x_n) \in k[x_1, \ldots, x_n]$. Hence,
		the geometric quotient $X_{123}/\Sym_n$ is isomorphic to the affine space $\mathbb A^{n - 3}$. 
		The natural inclusion $k[X_{123}]^{\Sym_n} \hookrightarrow k[X_{123}]$ gives rise to a (finite) geometric quotient map
		$\pi \colon X_{123} \to \mathbb{A}_k^{n - 3}$. The assertions about $X_{123}$ in part (a)
		now follows from the fact that $\mathbb A^{n - 3}$ is an irreducible variety of dimension $n - 3$. The assertions about
		$Y_{123}$ follow from the fact that $X_{123}$ is the affine cone over $Y_{123}$.
		
		\smallskip
		(b) $Y_{123}$ is an $(n-4)$-dimensional closed subvariety of $\mathbb P^{n-1}$ cut out by the polynomials $s_i(x_1, \ldots, x_n)$
		of degree $i$ for $i = 1, 2, 3$. Hence, the degree $Y_{123}$ is $6$. Denote the number of irreducible components of $Y_{123}$ by $m$. The group $\Sym_n$ acts transitively on these components. Hence, $m \leqslant \deg(Y_{123}) = 6$. The $\Sym_n$-action on
		the $m$ irreducible components of $Y_{123}$ gives rise to a transitive permutation representation $\Sym_n \to \Sym_m$.
		Since $n \geqslant 7$, this permutation representation has a non-trivial kernel. An easy exercise in finite group theory shows that
		either (i) $m = 1$, i.e., $Y_{123}$ is irreducible or (ii) $m = 2$, i.e., $Y_{123}$ has two irreducible components, 
		and each component is preserved by the alternating group $\Alt_n$. 
		
		\smallskip
		(c) Assume the contrary. From the description of the irreducible components of $Y_{123}$ it follows that then the
		the action of $\Alt_n$ on each irreducible component is not generically free. By Lemma~\ref{lem.faithful}(b), this implies that
		the action of $\Alt_n$ on each irreducible component of $Y_{123}$ is not faithful. In other words, for every irreducible component of $Y_{123}$, there is a non-trivial normal subgroup $N$ of $\Alt_n$ which acts trivially on that component.
		Since $A_n$ is a simple group, $N = \Alt_n$ is the only possibility for $N$. In other words,
		$\Alt_n$ acts trivially on $Y_{123}$.
		
		This means that for every element $(y_1: \ldots : y_n) \subset Y_{123}$ and every $\sigma \in \Alt_n$, we have
		\[ (y_{\sigma(1)}, \ldots, y_{\sigma(n)} ) = \lambda(\sigma) (y_1, \ldots, y_n) \, , \]
		in $\mathbb A^n$, where $\lambda(\sigma)$ is a non-zero scalar in $\overline{k}$. It is easy to see that the map $\sigma \to \lambda(\sigma)$
		is a multiplicative character $\Alt_n \to \overline{k}^*$. Since $\Alt_n$ is a simple group, it has no non-trivial multiplicative characters.
		We conclude that $(y_{\sigma(1)}, \ldots, y_{\sigma(n)}) = (y_1, \ldots, y_n)$ in $\mathbb A^n$ for every $\sigma \in \Alt_n$. 
		Since the natural action of $\Alt_n$ on $\{ 1, \ldots, n \}$ is transitive, this is only possible if
		$y_1 = \ldots = y_n$. In other words,
		$Y_{123}$ is either empty or consists of the single point $(1: \ldots : 1)$ in $\mathbb P^{n-1}$.
		This contradicts the assertion of part (a) that $\dim(Y_{123}) = n- 4 \geqslant 3$.
		
		\smallskip 
		(d) Suppose $y = (y_1, \ldots, y_n) \in X_{123}$. We need to show that $y_{\alpha, \beta} = (\alpha y_1 + \beta,  \ldots, \alpha y_n + \beta)$ also lies in $X_{123}$ for every $\alpha, \beta \in \overline{k}$. In other words, if $s_1(y) = s_2(y) = s_3(y) = 0$, then
		$s_1(y_{\alpha, \beta}) = s_2(y_{\alpha, \beta}) = y_3(y_{\alpha, \beta}) = 0$.
		
		Indeed, $s_1(y_{\alpha, \beta}) = s_1(y) \alpha + n \beta = 0$, since we are assuming that $s_1(y) = 0$ and $n$ is a power of $p = \Char(k)$. Similarly, 
		\[ s_2(y_{\alpha, \beta}) = s_2(y) \alpha^2 + (n-1) s_1(y) \alpha \beta +  \binom{n}{2} \beta^2 = 0 \]
		in $k$ (recall that we are assuming that $n \geqslant 7$ is a prime power). Finally,
		\[ s_3(y_{\alpha, \beta}) =  s_3(y) \alpha^3 + \alpha^2 \beta (n-2) s_2(y) + \alpha \beta^2 \binom{n-1}{2} s_1(y) + \binom{n}{3} = 0 , \]
		again because $s_1(y) = s_2(y) = s_3(y)$ and $\displaystyle \binom{n}{3} = 0$ in $k$ under our assumptions on $n$ and $\Char(k)$. 
	\end{proof}
	
	\begin{remark} \label{rem.characteristic} The condition on $n$ and $\Char(k)$ in part (d) can be weakened: our proof goes through  whenever 
		\[ \binom{n}{1} = \binom{n}{2} = \binom{n}{3} = 0 \]
		in $k$. In the next section we will only need the special case, where $n = p^r \geqslant 7$, considered above. 
	\end{remark}
	
	\begin{remark} The variety $Y_{123}$ is, in fact, irreducible. 
             This can be deduced from~\cite[Corollary 3]{sdcohen-erratum} under mild assumptions on $p$
             and from
                the main result of our forthcoming paper~\cite{edens-reichstein3} in full generality.
                Lemma~\ref{lem.123}(b) only asserts that $Y_{123}$ has at most 2 irreducible components.
                We chose to go with this weaker assertion  
                because its proof is short and self-contained, and because it
                suffices for the purpose of establishing Theorem~\ref{thm.main}(d) and (e) in the next section.
	\end{remark} 
	
	\section{Proof of Theorem~\ref{thm.main}(d-e)}
	\label{sect.main-de}
	
	We continue with the notational conventions introduced in the previous section. Recall that $Y_{123}$ in the closed  subvariety of
	$\mathbb P^{n-1}$ given by \[ s_1(x_1, \ldots, x_n) = s_2(x_1, \ldots, x_n) = s_3(x_1, \ldots, x_n) = 0 , \]
	where $s_1$, $s_2$ and $s_3$ are the first three elementary symmetric polynomials.
	Lemma~\ref{lem.123}(d) asserts that when $\Char(k) = p > 0$ and $n \geqslant 7$ is a power of $p$, 
	$Y_{123}$ is a cone over the point $(1: \ldots : 1)$ in $\mathbb P^{n-1}$.
	Let us denote the ``base" of this cone by $Z_{123} \subset \mathbb P(V) \simeq \mathbb P^{n-1}$, where 
	$V = k^n/ \Delta$. Here $\Delta$ denotes the small diagonal $\operatorname{Span}_k(1, 1, \ldots, 1)$ in $k^n$. 
	In other words, points of $Z_{123}$ are in bijective 
	correspondence with lines in $\mathbb P^{n-1}$ passing through $(1:1: \ldots 1)$. 
	
	\begin{proposition} \label{prop.123} Suppose $p > 0$ and $n = p^r \geqslant 7$. 
		
		\smallskip
		(a) $Z_{123}$ is a variety of dimension $n - 5$ and degree $6$ in $\mathbb P^{n-2}$. 
		
		\smallskip
		(b) The $\Sym_n$-action on $Y_{123}$ descends to a generically free action on $Z_{123}$.
		
		\smallskip
		(c) $\rd_p(\Sym_n) \leqslant n - 5$.
	\end{proposition}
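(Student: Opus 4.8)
The plan is to handle the three parts in turn, deducing (a) and (c) from the cone structure together with Lemma~\ref{lem.prel2}, while the real work is the generic freeness in (b).

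For part (a) I would use that $Y_{123}$ is a cone with vertex $v=(1:\ldots:1)$ over $Z_{123}$, which is the content of Lemma~\ref{lem.123}(d). Passing from a projective cone to its base lowers the dimension by exactly one and preserves the degree; hence, invoking Lemma~\ref{lem.123}(a),(b),
\[ \dim Z_{123} = \dim Y_{123} - 1 = (n-4)-1 = n-5, \qquad \deg Z_{123} = \deg Y_{123} = 6. \]
Since $V = k^n/\Delta$ has dimension $n-1$, we have $Z_{123} \subset \mathbb P(V) \cong \mathbb P^{n-2}$, which gives (a). Equivalently, one may realize $Z_{123}$ as the section of $Y_{123}$ by a hyperplane avoiding $v$.

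For part (b), the $\Sym_n$-action on $k^n$ fixes $\Delta$, hence descends to $V$ and to $\mathbb P(V)$; since the projection away from the $\Sym_n$-fixed vertex $v$ is equivariant and carries $Y_{123}$ onto $Z_{123}$, the variety $Z_{123}$ is $\Sym_n$-invariant and carries a genuine $\Sym_n$-action. I expect proving this action generically free to be the main obstacle, because generic freeness does not pass automatically from a cone to its base: the vertex is a global fixed point, so an element could fix an entire ruling line while moving its points, and one cannot simply cite Lemma~\ref{lem.123}(c). Instead I would repeat the argument of that lemma directly on $Z_{123}$. The equivariant projection puts the irreducible components of $Z_{123}$ in $\Sym_n$-equivariant bijection with those of $Y_{123}$, so by Lemma~\ref{lem.123}(b) there are at most two, each preserved by $\Alt_n$. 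If the action were not generically free, some component would be fixed pointwise by a nontrivial element; by Lemma~\ref{lem.faithful}(b) and the simplicity of $\Alt_n$ (here $n \geqslant 7$), this forces $\Alt_n$ to act trivially on that component, and then, by conjugating across the (at most two) components, trivially on all of $Z_{123}$.

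To close part (b) I would derive a contradiction from $\Alt_n$ acting trivially on $Z_{123} \subset \mathbb P(V)$, exactly as in Lemma~\ref{lem.123}(c). On each irreducible component the triviality of the projective action means every representing vector is an eigenvector, so each $\sigma \in \Alt_n$ scales the cone over the component by a single scalar $\chi(\sigma) \in \overline{k}^{\,*}$; the resulting character $\chi \colon \Alt_n \to \overline{k}^{\,*}$ is trivial because $\Alt_n$ is perfect, whence $\sigma \bar w = \bar w$ in $V$ for every representative $\bar w$. Lifting $\bar w$ to $w \in k^n$, the relation $\sigma w - w \in \Delta$ defines an additive homomorphism $\Alt_n \to (\overline{k}, +)$, again trivial by perfectness, so $\sigma w = w$ for all $\sigma \in \Alt_n$. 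Transitivity of $\Alt_n$ on the $n$ coordinates then forces $w \in \Delta$, i.e.\ $\bar w = 0$, which is impossible for a point of $\mathbb P(V)$ (and $Z_{123} \neq \emptyset$ since $\dim Z_{123} = n-5 \geqslant 2$). This contradiction establishes generic freeness.

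For part (c), generic freeness gives faithfulness by Lemma~\ref{lem.faithful}(a), so $\Sym_n$ embeds in $\PGL(V) = \PGL_{n-1}(k)$. Applying Lemma~\ref{lem.prel2} with $G = \Sym_n$ and $X = Z_{123}$, of dimension $b = n-5 \geqslant 1$ and degree $a = 6$, yields
\[ \rd_p(\Sym_n) \leqslant \max\{\, n-5,\ \rd_p(\Sym_6)\,\}. \]
Finally $\rd_p(\Sym_6) \leqslant \rd_0(\Sym_6) \leqslant 2$ by~\eqref{e.thm1.3} and~\eqref{e.upper-bounds}, while $n-5 \geqslant 2$ since $n \geqslant 7$; hence the maximum equals $n-5$ and $\rd_p(\Sym_n) \leqslant n-5$, as claimed.
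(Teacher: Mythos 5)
Your proposal is correct, and it matches the paper in parts (a) and (c) as well as in the reduction of (b) to the claim that $\Alt_n$ acts trivially on $Z_{123}$ (via the component count from Lemma~\ref{lem.123}(b), Lemma~\ref{lem.faithful}(b), and simplicity of $\Alt_n$). Where you genuinely diverge is in how the contradiction is extracted from that triviality. The paper stays on $Y_{123}$: it invokes Lemma~\ref{lem.123}(c) to choose a point $y\in Y_{123}$ with trivial stabilizer, notes that $\Alt_n$ fixes the corresponding point of $Z_{123}$ and hence acts faithfully on the ruling line $L\simeq \mathbb P^1$ through $y$ while fixing the vertex $(1:\cdots:1)$, and concludes because the group of automorphisms of $\mathbb P^1$ fixing a point is solvable, whereas $\Alt_n$ is not. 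You instead rerun the character argument of Lemma~\ref{lem.123}(c) directly in $\mathbb P(V)$ with $V=k^n/\Delta$: the multiplicative character $\chi\colon \Alt_n\to\overline{k}^{\,*}$ dies because $\Alt_n$ is perfect, and then --- this is exactly the step a naive transcription of Lemma~\ref{lem.123}(c) would miss, and which you correctly supply --- the ambiguity coming from the quotient by $\Delta$ produces an additive homomorphism $\Alt_n\to(\overline{k},+)$, which dies for the same reason; transitivity on coordinates then forces a representative vector into $\Delta$, a contradiction. Both routes are sound. Yours is self-contained at the level of $Z_{123}$ and never uses the generic freeness of $\Sym_n$ on $Y_{123}$ (Lemma~\ref{lem.123}(c)) at all, only parts (a), (b), (d) of that lemma; the paper's route is shorter given that Lemma~\ref{lem.123}(c) is already available, trading your linear-algebra computation for a one-line solvability obstruction. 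The only point you gloss is why $\chi(\sigma)$ is constant along a component: one should note that the affine cone over an irreducible component is irreducible and is contained in the union of the finitely many eigenspaces of $\sigma$, hence in a single one --- but this is the same implicit step as in the paper's own proof of Lemma~\ref{lem.123}(c), so it is a presentational quibble, not a gap.
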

	
	The inequalities of Theorem~\ref{thm.main}(d) and (e) are immediate consequences of Proposition~\ref{prop.123}(c). Indeed, 
	setting $n = p = 7$, we obtain $\rd_7(\Sym_7) \leqslant 2$ and setting $n = 8$ and $p = 2$, we obtain $\rd_2(\Sym_8) \leqslant 3$.
	It thus remains to prove Proposition~\ref{prop.123}.
	
	\begin{proof}[Proof of Proposition~\ref{prop.123}] (a) By Lemma~\ref{lem.123}(a), $\dim(Y_{123}) = n - 4$. Since $Y_{123}$ is a cone over
		$Z_{123}$, we conclude that $\dim(Z_{123}) = \dim(Y_{123}) - 1 = n - 5$.
		
		To find the degree of $Z_{123}$, note that $Z_{123}$ 
		is isomorphic to the intersection of the cone $Y_{123}$ in $\mathbb P^{n-1}$ 
		with a hyperplane $H \simeq \mathbb P^{n-2}$ not passing through the vertex $(1: \ldots : 1)$. More precisely, the
		closed embedding $Z_{123} \hookrightarrow \mathbb P^{n-2}$ is isomorphic to the closed embedding $(Y_{123} \cap H) \hookrightarrow H$.
		It is clear from this description that the degree of $Z_{123}$ in $\mathbb P^{n-2}$ is the same as the degree of
		$Y_{123}$ in $\mathbb P^{n-1}$. By Lemma~\ref{lem.123}(b) the degree of $Y_{123}$ in $\mathbb P^{n-1}$ is $6$, and part (a) follows.
		
		As an aside, we remark that the isomorphism between $Z_{123}$ and $Y_{123} \cap H$ is not $\Sym_n$-equivariant, since $H$ may not be invariant under $\Sym_n$.
		We can still use this isomorphism, because the $\Sym_n$-action plays no role in part (a). 
		
		\smallskip
		(b) The fact that the $\Sym_n$-action on $Y_{123}$ descends to $Z_{123}$ is clear from our construction.
		To show that this action is generically free, we argue by contradiction. Assume the contrary. 
		
		\smallskip
		{\bf Claim:} $\Alt_n$ acts trivially on $Z_{123}$.
		
		\smallskip
		To prove the Claim, recall that by Lemma~\ref{lem.123}(b) either (i) $Y_{123}$ is irreducible or (ii) $Y_{123}$ has exactly two irreducible components.
		In case (i), $Z_{123}$ is also irreducible (since $Y_{123}$ is a cone over it). By Lemma~\ref{lem.faithful}(b) the $\Sym_n$-action on $Z_{123}$ is not faithful. 
		The kernel of this action is a non-trivial normal subgroup of $\Sym_n$, i.e., either the alternating group $\Alt_n$ or all of $\Sym_n$. 
		Either way, $\Alt_n$ acts trivially on $Z_{123}$. In case (ii), each irreducible component $Y_{123}'$ and $Y_{123}''$
		on $Y_{123}$ is a cone with the vertex $(1: \ldots: 1)$. Thus $Z_{123}$ has two irreducible components $Z_{123}'$ and $Z_{123}''$, 
		where $Z_{123}'$ (respectively, $Z_{123}''$) is the base of $Y_{123}'$ (respectively, of $Y_{123}''$).
		Recall from Lemma~\ref{lem.123}(b) that odd permutations in $\Sym_n$ interchange $Y_{123}'$ and $Y_{123}''$; hence, they 
		also interchange $Z_{123}'$ and $Z_{123}''$. Thus the stabilizer of any point of $Z_{123}$ away from the intersection of the two components
		lies in the alternating group $\Alt_n$. We conclude that the action of $\Alt_n$ on each component $Z_{123}'$ and $Z_{123}''$ is not generically free.
		Now the same argument as in case (i) shows that $\Alt_n$ acts trivially on both $Z_{123}'$ and $Z_{123}''$. This proves the Claim.
		
		\smallskip
		Continuing with the proof of part (b), recall that
		by Lemma~\ref{lem.123}(c), $\Sym_n$ acts generically freely on $Y_{123}$. Choose a $\overline{k}$-point $y \in Y_{123}$
		whose stabilizer in $\Sym_n$ is trivial. Note that $y \neq (1: \ldots : 1)$, because the stabilizer of $(1: \ldots : 1)$ is all of $\Sym_n$.
		Let $z$ be the point of $Z_{123}$ corresponding to the line $L$ joining $y$ to the vertex $(1: \ldots : 1)$; see the diagram below. By our assumption $\Alt_n$ fixes $z$
		and hence acts on the line $L \simeq \mathbb P^1$. Since $L$ passes through the point $y$ with trivial stabilizer in $\Alt_n$, we conclude that
		this action is faithful. On the other hand, $\Alt_n$ fixes the point $(1: \ldots :1)$ on $L$. This means that $\Alt_n$ embeds into the subgroup 
		$B \subset \Aut(\mathbb L) \simeq \PGL_2(\overline{k})$, where $B$ consists of automorphisms of $L \simeq \mathbb P^1$ fixing the point 
		$(1: \ldots : 1)$. This group is isomorphic to the subgroup of upper-triangular matrices of the form $\begin{pmatrix} \alpha & \beta \\ 0 & 1 \end{pmatrix}$  
		in $\PGL_2(\overline{k})$. 
		Note that $B$ decomposes as a semidirect product $\mathbb{G}_a(\overline{k}) \rtimes \mathbb{G}_m(\overline{k})$, where 
			$\mathbb G_a$ is the additive group of strictly upper-triangular matrices, with $\alpha = 1$, and
			$\mathbb G_m$ is the multiplicative group of diagonal matrices, with $\beta = 0$. This semidirect product decomposition shows that
			$B$ is solvable. On the other hand, $\Alt_n$ is not solvable; hence, it cannot embed into $B$.
		This contradiction completes the proof of part (b).
		
		\pgfmathsetmacro{\R}{0.05}
		\pgfmathsetmacro{\Px}{0}
		\pgfmathsetmacro{\Py}{-0.75}
		\pgfmathsetmacro{\Pz}{0.75}
		\begin{figure}[h]
			\centering
			\begin{subfigure}{0.45\textwidth}
			    \includegraphics{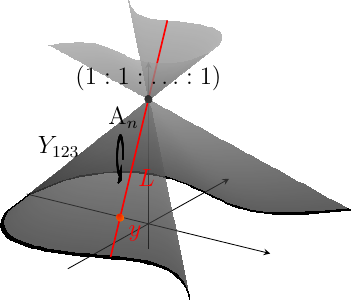}
				\caption{The variety $Y_{123}$ and the line $L\subset Y_{123}$, which has a faithful action of $\Alt_n$.}
			\end{subfigure}
			\raisebox{3.54cm}{$\Longrightarrow$}\hspace{20pt}
			\begin{subfigure}{0.45\textwidth}
				\includegraphics{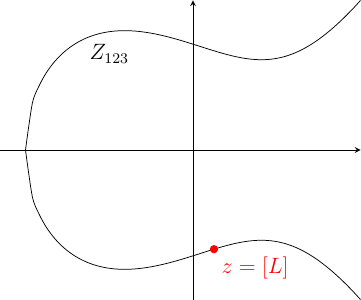}
				\caption{The variety $Z_{123}$ with the $\Alt_n$-fixed point $z=[L]\in Z_{123}$.} 
			\end{subfigure}
		\end{figure}
		\smallskip
		(c) Parts (a) and (b) allow us to apply Lemma~\ref{lem.prel2} with $G = \Sym_n$, $X = Z_{123}$, $a = 6$ and $b = n-5$. We conclude that
		\[ \rd_p(\Sym_n) \leqslant \max \{ n - 5, \rd_p(\Sym_6) \} \leqslant \max \{ n - 5, \, \rd_0(\Sym_6) \} \leqslant \max \{ n - 5, 2 \} = n-5 . \]
		Here the first inequality follows from Lemma~\ref{lem.prel2}, the second from~\eqref{e.thm1.3} and the third from~\eqref{e.upper-bounds}. The last equality follows from our assumption that $n \geqslant 7$.
	\end{proof}

	\bibliographystyle{plain}
	\bibliography{bibliography}

\begin{thebibliography}{10}

\bibitem{brauer}
R.~Brauer.
\newblock On the resolvent problem.
\newblock {\em Ann. Mat. Pura Appl. (4)}, 102:45--55, 1975.

\bibitem{bray_holt_roney-dougal_2013}
J.~Bray, D.~Holt, and C.~Roney-Dougal.
\newblock {\em The Maximal Subgroups of the Low-Dimensional Finite Classical
  Groups}.
\newblock London Mathematical Society Lecture Note Series. Cambridge University
  Press, 2013.

\bibitem{sdcohen-erratum}
S.~D. Cohen.
\newblock Corrections to: ``the galois group of a polynomial with two
  indeterminate coefficients''.
\newblock {\em Pacific J. Math.}, 97(2):483--486, 1981.

\bibitem{AtlasOfFiniteGroups}
J.~Conway, R.~Curtis, S.~Norton, R.~Parker, and R.~Wilson.
\newblock {\em Atlas of Finite Simple Groups}.
\newblock Oxford University Press, 1985.
\newblock Maximal Subgroups and Ordinary Characters for Simple Groups.

\bibitem{dixmier}
J.~Dixmier.
\newblock Histoire du 13e probl\`eme de {H}ilbert.
\newblock In {\em Analyse diophantienne et g\'{e}om\'{e}trie alg\'{e}brique},
  volume~3 of {\em Cahiers S\'{e}m. Hist. Math. S\'{e}r. 2}, pages 85--94.
  Univ. Paris VI, Paris, 1993.

\bibitem{edens-reichstein}
O.~Edens and Z.~Reichstein.
\newblock Essential dimension of symmetric groups in small characteristic.
\newblock {\em to appear in C. R. Math. Acad. Sci. Paris, arXiv:2308.10096},
  2022.

\bibitem{edens-reichstein3}
O.~Edens and Z.~Reichstein.
\newblock Irreducibility of certain projective subvarieties cut out by
  elementary symmetric polynomials.
\newblock {\em in preparation}, 2024.

\bibitem{farb-wolfson}
B.~Farb and J.~Wolfson.
\newblock Resolvent degree, {H}ilbert's 13th problem and geometry.
\newblock {\em Enseign. Math.}, 65(3-4):303--376, 2019.

\bibitem{gsw}
Claudio G\'{o}mez-Gonz\'{a}les, Alexander~J. Sutherland, and Jesse Wolfson.
\newblock Generalized versality, special points, and resolvent degree for the
  sporadic groups.
\newblock {\em J. Algebra}, 647:758--793, 2024.

\bibitem{hamilton}
W.~Hamilton.
\newblock Inquiry into the validity of a method recently proposed by {G}eorge
  {B}. {J}errard, {E}sq., for transforming and resolving équations of elevated
  degree.
\newblock {\em Bristish Assoc. Report}, Report of the Sixth Meeting of the
  British Association for the Advancement of Science held at Bristol in August
  1836:295--348, 1837.

\bibitem{heberle-sutherland}
C.~Heberle and A.~J. Sutherland.
\newblock Upper bounds on resolvent degree via {S}ylvester's obliteration
  algorithm.
\newblock {\em New York J. Math.}, 29:107--146, 2023.

\bibitem{hilbert9}
D.~Hilbert.
\newblock \"{U}ber die {G}leichung neunten {G}rades.
\newblock {\em Math. Ann.}, 97(1):243--250, 1927.

\bibitem{hilbert-problems}
David Hilbert.
\newblock Mathematical problems.
\newblock {\em Bull. Amer. Math. Soc. (N.S.)}, 37(4):407--436, 2000.
\newblock Reprinted from Bull. Amer. Math. Soc. {{\bf{8}}} (1902), 437--479.

\bibitem{MR98}
D.~Mercier and R.~Rolland.
\newblock Polyn\^{o}mes homog\`enes qui s'annulent sur l'espace projectif
  {${\rm P}^m({\bf F}_q)$}.
\newblock {\em J. Pure Appl. Algebra}, 124(1-3):227--240, 1998.

\bibitem{reichstein2022hilberts}
Z.~Reichstein.
\newblock Hilbert's 13th problem for algebraic groups.
\newblock {\em to appear in L'Enseignement Mathématique. {\rm Preprint}
  arxiv.2204.13202}, 2024.

\bibitem{reichstein-ems}
Z.~Reichstein.
\newblock From {H}ilbert's 13th problem to essential dimension and back.
\newblock {\em Eur. Math. Soc. Magazine.}, 122:4--15, December 2021.

\bibitem{sutherland}
A.~J. Sutherland.
\newblock Upper bounds on resolvent degree and its growth rate.
\newblock {\em {\rm Preprint} arXiv:2107.08139}, 2021.

\bibitem{wolfson}
J.~Wolfson.
\newblock Tschirnhaus transformations after {H}ilbert.
\newblock {\em Enseign. Math.}, 66(3-4):489--540, 2020.

\end{thebibliography}
\end{document}